\newcommand{\yd}{y^\delta}
\newcommand{\Jaa}{T_{-a,\alpha}}
\newcommand{\aast}{\alpha_{\ast}}
\newcommand{\xa}{x_{\alpha}}
\newcommand{\xadast}{x_{\aast}^\delta}
\newcommand{\xaast}{x_{\aast}}
\newcommand{\xp}{x^\dag}
\newcommand{\xdag}{x^\dag}
\newcommand{\xad}{x_{\alpha}^\delta}
\newcommand{\domain}{\mathcal{D}}
\newtheorem{ass}{Assumption}
\newtheorem*{cd}{Case distinction}
\newtheorem{mprobl}{Model problem}
\begin{document}
\mainmatter              
\title{Case studies and a pitfall for nonlinear variational regularization under\\ conditional stability}
\titlerunning{Case studies and a pitfall under conditional stability}  
%
\author{Daniel Gerth \and Bernd Hofmann \and Christopher Hofmann}
\authorrunning{D.~Gerth, B. Hofmann and C.~Hofmann} 
%
%
\institute{Faculty of Mathematics, Chemnitz University of Technology, 09107 Chemnitz, Germany\\
\email{daniel.gerth/bernd.hofmann/christopher.hofmann@mathematik.tu-chemnitz.de}
}

\maketitle              

\begin{abstract}
Conditional stability estimates are a popular tool for the regularization of ill-posed problems. A drawback in particular under nonlinear operators is that additional regularization is needed for obtaining stable approximate solutions if the validity area of such estimates is not completely known.
In this paper we consider Tikhonov regularization under conditional stability estimates for nonlinear ill-posed operator equations in Hilbert scales. We summarize assertions on convergence and convergence rate in three cases describing the relative smoothness of the penalty in the Tikhonov functional and of the exact solution. For oversmoothing penalties, for which the rue solution no longer attains a finite value, we present a result with modified assumptions for a priori choices of the regularization parameter yielding convergence rates of optimal order for noisy data. We strongly highlight the local character of the conditional stability estimate and demonstrate that pitfalls may occur through incorrect stability estimates. Then convergence can completely fail and the stabilizing effect of conditional stability may be lost. Comprehensive numerical case studies for some nonlinear examples illustrate such effects.

\keywords{Nonlinear inverse problems,
conditional stability,
Tikhonov regularization,
oversmoothing penalties,
Hilbert scales, convergence rates}\\

{\bf MSC 2010}: 47J06, 65J20, 47A52
\end{abstract}

\section{Introduction}
\label{sec:intro}
Regularization theory for nonlinear ill-posed inverse problems is always a challenging endeavor. In contrast to linear inverse problems, where the theory is rather coherent and well-developed (see, for example, the monographs \cite{EnglHankeNeubauer96,Louis89}), the nonlinear theory is harder to grasp. Numerous assumptions exist in the literature that restrict the nonlinear behavior of the forward operator in such a way that stable approximate solutions exist which converge to the exact solution in the limit of vanishing data noise. It is important to keep in mind that the nonlinearity conditions only hold locally. A main goal of this paper is to show that this can be a pitfall, as incorrect localization leads to the loss of the stabilizing property. A second objective of the paper is to verify theoretical convergence results in numerical examples, as well as pointing out some open questions. To this end, we focus here on regularization in Hilbert scales. Going into detail, we consider in this paper the stable approximate solution of the nonlinear operator equation
\begin{equation}
  \label{eq:opeq}
  F(x) =y
\end{equation}
by variational (Tikhonov-type) regularization. Equation \eqref{eq:opeq} serves as a model for an inverse problem
where the nonlinear forward operator $F: \domain(F) \subseteq X \to Y$ maps between the infinite dimensional real Hilbert
 spaces $X$ and $Y$ with domain $\domain(F)$. The symbols $\|\cdot\|_X,\;\|\cdot\|_Y$ and $\langle \cdot,\cdot\rangle_X,\;\langle \cdot,\cdot\rangle_Y$ designate the norms and inner products of the spaces $X$ and $Y$, respectively. Instead of the exact right-hand side $y=F(\xdag)$, with the uniquely determined preimage $\xdag \in \domain(F)$,
 we assume to know a noisy element $\yd \in Y$ satisfying the noise model
 \begin{equation} \label{eq:noise}
\|y-\yd\|_Y \le \delta
\end{equation}
with some noise level $\delta>0$. Based on this data element $\yd \in Y$ we use as approximations to $\xdag$ global minimizers $\xad \in \domain(F)$ of the extremal problem
\begin{equation}
  \label{eq:tikhonov}
  T^\delta_{\alpha}(x) := \|F(x) - \yd\|_Y^{2} + \alpha\|B x\|_X^2 \to \min, \;\; \mbox{subject to} \;\;  x\in\domain(F).
\end{equation}
Here, $B\colon \domain(B)\subset X \to X$ is a densely defined, unbounded, linear, and self-adjoint operator which is strictly positive such that $\|Bx\|_{X}\geq c_B \|x\|_{X}$ holds for all $x\in \domain(B)$. Such operators $B$ generate a Hilbert scale $\{X_\nu\}_{\nu \in \mathbb{R}}$, where $X_\nu=\domain(B^\nu)$ coincides with the range $\mathcal{R}(B^{-\nu})$ of the operator $B^{-\nu}$. In particular $X_0=X$, and we set $\|x\|_\nu:=\|B^\nu x\|_X$ for the norm of the Hilbert scale element $x\in X_\nu$. With this, the specific Tikhonov functional  $T^\delta_\alpha: X \to [0,\infty]$ in \eqref{eq:tikhonov} is the weighted sum of the quadratic misfit functional $\|F(\cdot) - \yd\|_{Y}^{2}$ and the Hilbert-scale  penalty functional $\|B\, \cdot\|_X^2=\|\cdot\|_1^2$, where the regularization parameter $\alpha>0$ acts as weight factor. Note that no generality is lost by considering only the penalty in the $1$-norm $\|\cdot\|_1$, since one can always rescale the operator $B$ to obtain  $\|Bx\|=\|(B^{\frac{1}{p}})^p x\|=\|\tilde B^p x\|$ for $p>0$, i.e, one obtains a penalty of arbitrary index $p$ in the Hilbert scale generated by the operator $\tilde B:=B^{\frac{1}{p}}$. Finally, we mention that for $x \in \domain(F)$ we set $T^\delta_{\alpha}(x):=+\infty$ if $x \notin \domain(B)$, and that the Tikhonov functional attains a well-defined value $0 \le T^\delta_{\alpha}(x)<+\infty$ if $x \in \domain:=\domain(F) \cap \domain(B)\not=\emptyset$.

A typical phenomenon of the nonlinear equation \eqref{eq:opeq} as a model for an inverse problem is {\sl local ill-posedness} at the solution point $\xdag \in \domain(F)$
(cf.~\cite[Def.~2]{HofSch94} or \cite[Def.~3]{HofPla18}), which means that inequalities of the form
\begin{equation} \label{eq:locstab}
\|x-\xdag\|_X \le K\,\varphi(\|F(x)-F(\xdag)\|_Y)  \quad  \mbox{for all} \;\; x\in \mathcal{B}^X_r(\xdag) \cap \domain(F)
\end{equation}
cannot hold for any positive constants $K,\;r$ and any index function $\varphi$.\footnote{Throughout, $\mathcal{B}^H_r(\bar x)$ denotes a closed ball in the Hilbert space $H$ around $\bar x \in H$ with radius $r>0$. Furthermore, we call a function $\varphi\colon [0,\infty) \to [0,\infty)$
index function if it is continuous, strictly increasing and satisfies the boundary condition $\varphi(0) = 0$.}
However, the inverse problem literature offers numerous examples, where the left-hand term $\|x-\xdag\|_X$ in \eqref{eq:locstab} is replaced with a weaker norm $\|x-\xdag\|_{-a}\;(a>0)$
and a corresponding conditional stability estimate takes place. In the sequel, we restrict our considerations to the concave index functions $\varphi(t)=t^\gamma$ of H\"older-type with exponents $0<\gamma \le 1$
and hence to {\sl conditional stability estimates} of the form
\begin{equation} \label{eq:staba}
\|x-\xdag\|_{-a} \le K\,\|F(x)-F(\xdag)\|_Y^\gamma  \quad  \mbox{for all} \;\; x\in Q \cap \domain(F)
\end{equation}
with some index $a>0$, which can be interpreted as {\sl degree of ill-posedness} of $F$ at $\xdag$, a suitable subset $Q$ in $X$ which acts as the aforementioned localization of the nonlinearity condition, and a constant $K>0$ that may depend on $Q$.

Let us consider the situation that $\xdag \in Q$ and $Q$ is known. Then one may employ a least squares iteration process of minimizing the norm square
\begin{equation} \label{eq:lsmin}
\|F(x)-\yd\|^2_Y \to \min, \quad \mbox{subject to} \quad x \in  Q \cap \domain(F).
\end{equation}
The minimizers $x_{ls}$ of \eqref{eq:lsmin} satisfy $\|F(x_{ls})-y^\delta\|\leq\delta$ by definition and due to $x^\dag\in Q$. Hence we have convergence  $\|x_{ls}^\delta-\xdag\|_{-a} \to 0$ as $\delta \to 0$ of these least squares-type  solutions to $\xdag$ in the norm of the space $X_{-a}$ which is weaker than the one in $X$.

To achieve convergence and even convergence rates in the
norm of $X$, additional smoothness $\xdag \in X_p$ for some $p>0$ is needed. If the approximate solutions $x_{ls}^\delta \in Q \cap \domain(F)$ also possess such smoothness with $\|x_{ls}^\delta\|_p$
uniformly bounded for all $0<\delta \le \bar \delta$, then, with $-a<t \le p$ the interpolation inequality in Hilbert scales (see \cite{KreinPetunin66}) applies in the form
\begin{equation}\label{eq:interpol}
\|x\|_t \leq \|x\|_{-a}^{\frac{p-t}{p+a}} \|x\|_p^{\frac{t+a}{p+a}}
\end{equation}
for all $x \in X_p$. Hence we derive from \eqref{eq:staba} and \eqref{eq:interpol}  with $t=0$ and by the triangle inequality that
\begin{equation*} \label{eq:stab0}
\|x_{ls}^\delta-\xdag\|_X \le \bar K\,\delta^{\frac{\gamma p}{p+a}}
\end{equation*}
for sufficiently small $\delta>0$ and some constant $\bar K$. A way to ensure the property that the approximate solutions belong to $X_p \cap Q \cap \domain(F)$ is to use regularized solutions which minimize the
Tikhonov functional $  \|F(x) - \yd\|_Y^{2} + \alpha\|B^s x\|_X^2$, subject to $x\in Q \cap \domain(F)$, where $s \ge p$ is required. Hence, Tikhonov-type regularization is here an auxiliary tool which complements the conditional stability
estimate \eqref{eq:staba} in order to obtain stable approximate solutions measured in the norm of $X$.

On the other hand, we have to take into account the frequently occurring situation that the set $Q$ in \eqref{eq:staba} is not or not completely known and a minimization process according to \eqref{eq:lsmin}
is impossible, because of a not completely known set of constraints for the optimization problem.  Nevertheless, a combination of the conditional stability estimate \eqref{eq:staba} with variational regularization of the form \eqref{eq:tikhonov} can be successful. For a systematic treatment of convergence results in the context of regularization theory we will distinguish the following cases relating the smoothness of the solution $x^\dag$ and of the approximate solutions $x_\alpha^\delta$ implied by the functional \eqref{eq:tikhonov}:

\begin{cd} 
\begin{enumerate}
\item[]
\item[(a)] Classical regularization: $\xdag \in X_p$ for $p>1$, which means that $\|B\xdag\|_X^2<+\infty$ and there is some source element $w \in X_{\varepsilon} \;(\varepsilon >0)$ such that $\xdag=B^{-1} w$;
\item[(b)] Matching smoothness:  $\xdag \in X_1$, i.e.~$\|B\xdag\|_X<\infty$, but $\xdag \notin X_{1+\varepsilon}$ for all $\varepsilon>0$.
\item[(c)] Oversmoothing regularization: $\xdag \in X_p$ for some $0<p<1$, but $\xdag \notin X_1$, i.e.~$\|B\xdag\|_X=+\infty$.
\end{enumerate}
\end{cd}
The goal of this paper is to discuss the different opportunities and limitations for convergence and rates of regularized solutions $\xad$ in the situations (a), (b), and (c), respectively. It is organized as follows: Section~\ref{sec:conv} recalls assertions on convergence of regularized solutions in cases (a) and (b). Moreover, usual technical assumptions on forward operator, its domain and the exact solution are listed.
In Section~\ref{sec:rates}, H\"older rate results under conditional stability estimates are summarized for the cases of classical regularization and matching smoothness. The rate result of Proposition~\ref{pro:rate_c} for the oversmoothing case (c) is of
specific interest. It requires two-sided inequalities as conditional stability estimates, whereas in cases (a) and (b) only one-sided inequalities are needed. Three inverse model problems of ill-posed nonlinear equations covering all cases (a), (b), and (c) are outlined in Section~\ref{sec:examples}, for which numerical case studies are presented in Section~\ref{sec:studies}. The proof of  Proposition~\ref{pro:rate_c} is given in the appendix.

\section{Convergence}
\label{sec:conv}

In this section we collect properties of the regularized solutions $\xad$ obtained as solutions of the optimization problem  \eqref{eq:tikhonov} for the cases (a), (b), and (c) in different ways.
Throughout this paper we suppose that the following assumption concerning the nonlinear forward operator $F$ and the solvability of the operator equation \eqref{eq:opeq} holds true.

\begin{ass} \label{ass:basic1}
The operator $F: D(F) \subseteq X \to Y$ is weak-to-weak sequentially continuous and its domain $D(F)$ is a convex and closed subset of $X$. For the right-hand side $y=F(\xdag)\in Y$ under consideration let $\xdag \in \domain(F)$ be the
uniquely determined solution to the operator equation \eqref{eq:opeq}.
\end{ass}

Under the setting introduced in Section~\ref{sec:intro}, the penalty $\|Bx\|_X^2$ as part of the Tikhonov functional $T_\alpha^\delta$ in  \eqref{eq:tikhonov} is a non-negative, convex, and sequentially lower semi-continuous functional. Moreover, this functional is {\it stabilizing} in the sense that all its sublevel sets are weakly sequently compact in $X$. Taking also into account Assumption~\ref{ass:basic1}, the Assumptions~3.11 and 3.22 of \cite{ScKaHoKa12} are satisfied and the assertions from \cite[Section~4.1.1]{ScKaHoKa12} apply, which ensure {\it existence} and {\it stability} of the regularized solutions $\xad$ in our present Hilbert scale setting,
consistent for all three cases (a), (b), and (c).

We emphasize at this point that we always have $\xad \in X_1$ by definition of the minimizers in \eqref{eq:tikhonov}, but only in the cases (a) and (b) one can take profit of the inequality
\begin{equation} \label{eq:regineq1}
T_\alpha^\delta(\xad) \le T_\alpha^\delta(\xdag),
\end{equation}
which implies for all $\alpha>0$ that
\begin{equation} \label{eq:regineq2}
\|\xad\|_1 \le \sqrt{\|\xdag\|^2_1+ \frac{\delta^2}{\alpha}}.
\end{equation}
In the case (c), however, due to $\xdag \notin X_1$ and hence $\|\xdag\|_1=+\infty$ we have no such uniform bounds of  $\|\xad\|_1$ from above. On the contrary, in \cite{GerthHofmann19} it was shown that $\|\xad\|_1\rightarrow\infty$ as $\delta\rightarrow 0$ is necessary even for weak convergence of the regularizers $\xad$ to $x^\dag$.

In order to obtain \textit{convergence} of the regularized solutions $\xad$ to $\xdag$ as $\delta \to 0$, the interplay of the noise level and the choice of the regularization parameter $\alpha>0$, which we choose either a priori $\alpha=\alpha(\delta)$ or a posteriori $\alpha=\alpha(\delta,y^\delta)$, must be appropriate. In the literature, this interplay is typically controlled by the limit conditions
\begin{equation} \label{eq:conv}
\alpha \to 0 \qquad \mbox{and} \qquad \frac{\delta^2}{\alpha} \to 0 \qquad \mbox{as} \qquad \delta \to 0.
\end{equation}

In our case (a) this is a sufficient description.

\begin{proposition} \label{pro:convergence}
Let the regularization parameter $\alpha>0$ fulfill the conditions \eqref{eq:conv}. Then we have under Assumption~\ref{ass:basic1} and for case (a), i.e.~for $1 < p<\infty$,  by setting  $\alpha_n=\alpha(\delta_n)$ or $\alpha_n=\alpha(\delta_n,y^{\delta_n})$,
$x_n=x_{\alpha_n}^{\delta_n}$, that for $\delta_n \to 0$ as $n \to \infty$
\begin{equation*} \label{eq:conv2}
\lim \limits_{n \to \infty} \|x_n\|_1 = \|\xdag\|_1,
\end{equation*}
and
\begin{equation*} \label{eq:conv3}
\lim \limits_{n \to \infty} \|x_n-\xdag\|_\nu=0\qquad \mbox{for all} \qquad 0 \le \nu \le 1.
\end{equation*}
\end{proposition}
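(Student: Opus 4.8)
The plan is to run the standard subsequence argument for variational regularization, with the passage from weak to strong convergence supplied by the Hilbert-space (Radon--Riesz) property of $X_1$. Fix a sequence $\delta_n\to 0$ with associated parameters $\alpha_n=\alpha(\delta_n)$ (resp.\ $\alpha(\delta_n,y^{\delta_n})$) and minimizers $x_n=x_{\alpha_n}^{\delta_n}$, whose existence is guaranteed as recalled above. In case (a) we have $\xdag\in X_p\subseteq X_1\subseteq\domain$, so the comparison inequality \eqref{eq:regineq1} applies and, using \eqref{eq:noise},
\[
\|F(x_n)-\yd\|_Y^2+\alpha_n\|x_n\|_1^2\;\le\;\|F(\xdag)-\yd\|_Y^2+\alpha_n\|\xdag\|_1^2\;\le\;\delta_n^2+\alpha_n\|\xdag\|_1^2 .
\]
Discarding the nonnegative penalty on the left yields $\|F(x_n)-\yd\|_Y^2\le\delta_n^2+\alpha_n\|\xdag\|_1^2\to0$ by \eqref{eq:conv}, hence with \eqref{eq:noise} the images converge strongly, $F(x_n)\to y=F(\xdag)$ in $Y$. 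Discarding instead the misfit term and dividing by $\alpha_n$ gives \eqref{eq:regineq2}, so by \eqref{eq:conv} the sequence $\{x_n\}$ is bounded in $X_1$ with $\limsup_{n\to\infty}\|x_n\|_1\le\|\xdag\|_1$.

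Since $X_1=\domain(B)$ is a Hilbert space under $\|\cdot\|_1$, the bounded sequence $\{x_n\}$ has, along any subsequence, a further subsequence $x_{n_k}\rightharpoonup\tilde x$ converging weakly in $X_1$ (equivalently, $\{x_n\}$ lies in a weakly sequentially compact sublevel set of the stabilizing penalty); a fortiori $x_{n_k}\rightharpoonup\tilde x$ weakly in $X$. Because $\domain(F)$ is convex and closed it is weakly closed in $X$, so $\tilde x\in\domain(F)$; the weak-to-weak sequential continuity of $F$ then gives $F(x_{n_k})\rightharpoonup F(\tilde x)$ in $Y$, while the first step gives $F(x_{n_k})\to y$ strongly, whence $F(\tilde x)=y$ and, by the uniqueness part of Assumption~\ref{ass:basic1}, $\tilde x=\xdag$. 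As every subsequence of $\{x_n\}$ has a further subsequence converging weakly in $X_1$ to the same limit $\xdag$, the whole sequence satisfies $x_n\rightharpoonup\xdag$ in $X_1$.

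It remains to upgrade this to the claimed norm convergence. Weak lower semicontinuity of $\|\cdot\|_1$ gives $\|\xdag\|_1\le\liminf_{n\to\infty}\|x_n\|_1$, which together with the $\limsup$ bound from the first paragraph yields $\lim_{n\to\infty}\|x_n\|_1=\|\xdag\|_1$, proving the first assertion. Writing $\|x_n-\xdag\|_1^2=\|Bx_n\|_X^2-2\langle Bx_n,B\xdag\rangle_X+\|B\xdag\|_X^2$ and using that $x_n\rightharpoonup\xdag$ in $X_1$ forces $\langle Bx_n,B\xdag\rangle_X\to\|B\xdag\|_X^2$ together with $\|Bx_n\|_X=\|x_n\|_1\to\|\xdag\|_1=\|B\xdag\|_X$, the right-hand side tends to $0$, i.e.\ $x_n\to\xdag$ strongly in $X_1$. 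Finally, for $0\le\nu\le1$ one has, by the interpolation inequality together with $\|z\|_0\le c_B^{-1}\|z\|_1$ (or directly from the spectral calculus for $B\ge c_B I$), the estimate $\|z\|_\nu\le c_B^{\nu-1}\|z\|_1$ for all $z\in X_1$, so $\|x_n-\xdag\|_\nu\le c_B^{\nu-1}\|x_n-\xdag\|_1\to0$.

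The argument is almost entirely routine; the one point that needs a little care is the interplay of the weak topologies of $X$ and $X_1$ — weak $X$-convergence is what lets one pin down the limit via the weakly closed domain $\domain(F)$ and the weak-to-weak continuity of $F$, but the concluding Radon--Riesz step must take place in $X_1$, which is legitimate because a bounded sequence in the Hilbert space $X_1$ that converges weakly in $X$ also converges weakly in $X_1$ to the same (necessarily $X_1$-valued) limit. Note that $p>1$ is used only to ensure $\xdag\in X_1$, so the same reasoning in fact also covers the matching-smoothness case (b).
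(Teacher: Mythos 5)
Your argument is correct and is precisely the standard convergence proof for variational regularization (comparison with $T_{\alpha_n}^{\delta_n}(\xdag)$, boundedness in $X_1$, weak subsequential limits identified via weak closedness of $\domain(F)$ and weak-to-weak continuity of $F$, then the Radon--Riesz upgrade from convergence of the $\|\cdot\|_1$-norms), which is exactly the content of Theorem~4.3 and Corollary~4.6 of \cite{ScKaHoKa12} that the paper cites in lieu of a written-out proof. Your added care about the interplay of weak convergence in $X$ versus $X_1$, and the elementary bound $\|z\|_\nu\le c_B^{\nu-1}\|z\|_1$ for $0\le\nu\le1$, correctly fill in the details the paper leaves to the reference.
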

\begin{proof} The proof follows along the lines of Theorem~4.3 and Corollary~4.6 from \cite{ScKaHoKa12}. \end{proof}
As we will see in Proposition \ref{pro:rate_a} in the next section, the optimal parameter choice fulfills the conditions \eqref{eq:conv} in case (a). In case (b), where the smoothness of $x^\dag$ coincides with the smoothness of the regularization, i.e., $p=1$, the matter becomes unclear. On one hand, it is easily seen that Proposition \ref{pro:convergence} holds in the exact same way for case (b), which is a consequence of \eqref{eq:regineq2} holding in both cases. Hence, we have the following corollary:
\begin{corollary} \label{cor:ball}
Under the assumptions of Proposition~\ref{pro:convergence}, in particular for the cases (a) and (b) and for a regularization parameter choice satisfying \eqref{eq:conv}, we have that the regularized solutions
$\xad$ belong to the ball $\mathcal{B}^{X_\nu}_r(\xdag)$ for prescribed values $r>0$ and $0 \le \nu \le 1$ whenever $\delta>0$ is sufficiently small.
\end{corollary}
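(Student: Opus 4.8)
The plan is to obtain Corollary~\ref{cor:ball} as a direct consequence of Proposition~\ref{pro:convergence}, after noting that the proposition's statement carries over verbatim to case (b) by the argument already sketched in the text (inequality~\eqref{eq:regineq2} holds in both cases (a) and (b), so the same reasoning as in \cite{ScKaHoKa12} applies). Thus we may treat cases (a) and (b) simultaneously. The key observation is that the statement to be proven is precisely a reformulation of the convergence assertion $\|x_n-\xdag\|_\nu\to 0$ in terms of ``eventual membership in a ball,'' so the work is essentially translating a sequential limit statement into an $\varepsilon$--$\delta$ statement.

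The steps I would carry out are as follows. First, fix $r>0$ and $\nu\in[0,1]$. Suppose, for contradiction, that the claim fails: then there exists a sequence $\delta_n\to 0$ and corresponding regularization parameters $\alpha_n=\alpha(\delta_n)$ (or $\alpha_n=\alpha(\delta_n,y^{\delta_n})$) satisfying \eqref{eq:conv}, such that the associated minimizers $x_n=x_{\alpha_n}^{\delta_n}$ satisfy $\|x_n-\xdag\|_\nu>r$ for all $n$. Second, apply Proposition~\ref{pro:convergence} (valid here for case (a), and for case (b) by the remark above) to this very sequence: it yields $\|x_n-\xdag\|_\nu\to 0$ as $n\to\infty$, in particular $\|x_n-\xdag\|_\nu\le r$ for all sufficiently large $n$. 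This contradicts the assumption, so the claim holds. Equivalently, and perhaps more cleanly for exposition, one argues directly: given $r>0$, were there no threshold $\bar\delta$ with the desired property, one could extract a sequence witnessing the failure as above, and Proposition~\ref{pro:convergence} again gives the contradiction. Either phrasing reduces the corollary to the proposition in a couple of lines.

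The only point requiring a word of care is the logical structure of the quantifiers: Proposition~\ref{pro:convergence} is stated for an \emph{arbitrary} admissible sequence $(\delta_n,\alpha_n)$, and the corollary asserts a uniform threshold $\bar\delta=\bar\delta(r,\nu)$ below which \emph{every} admissible choice lands in the ball. The contradiction argument handles this correctly because the hypothetical bad sequence is itself an admissible sequence to which the proposition applies. I do not expect any genuine obstacle here; the ``hard part,'' such as it is, was already done in establishing Proposition~\ref{pro:convergence} and in observing that \eqref{eq:regineq2} — hence the whole convergence machinery — survives in the matching-smoothness case (b). The corollary is then immediate.

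\begin{proof}
As noted in the text preceding the statement, inequality~\eqref{eq:regineq2} is available both in case (a) and in case (b), so the proof of Proposition~\ref{pro:convergence} applies unchanged in case (b) as well; we therefore treat both cases together. Fix $r>0$ and $\nu$ with $0\le\nu\le 1$, and assume to the contrary that there is no $\bar\delta>0$ with the asserted property. Then we can pick a sequence $\delta_n\to 0$ and admissible parameters $\alpha_n=\alpha(\delta_n)$ or $\alpha_n=\alpha(\delta_n,y^{\delta_n})$ fulfilling \eqref{eq:conv} such that the minimizers $x_n=x_{\alpha_n}^{\delta_n}$ of \eqref{eq:tikhonov} satisfy $x_n\notin \mathcal{B}^{X_\nu}_r(\xdag)$, i.e.\ $\|x_n-\xdag\|_\nu>r$ for all $n$. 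Applying Proposition~\ref{pro:convergence} to this sequence yields $\lim_{n\to\infty}\|x_n-\xdag\|_\nu=0$, whence $\|x_n-\xdag\|_\nu\le r$ for all sufficiently large $n$ --- a contradiction. Hence such a $\bar\delta>0$ exists, and for all $0<\delta\le\bar\delta$ the regularized solutions $\xad$ lie in $\mathcal{B}^{X_\nu}_r(\xdag)$.
\end{proof}
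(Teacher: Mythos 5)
Your argument is correct and matches the paper's route exactly: the paper gives no written proof for this corollary, deriving it immediately from the convergence assertion of Proposition~\ref{pro:convergence} (extended to case (b) via \eqref{eq:regineq2}), which is precisely what your contradiction argument formalizes. The care you take with the quantifier structure is a nice touch but does not change the substance.
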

The surprising difference between the cases (a) and (b) on the other hand, is that the optimal choice of the regularization parameter for (b) (we show in Proposition \ref{pro:rate_b} below that $\alpha\sim\delta^2$ yields the optimal convergence rate) violates the second condition in \eqref{eq:conv}. Since obviously a convergence rate implies norm convergence, this means that the condition $\delta^2/\alpha\rightarrow 0$ in \eqref{eq:conv} is not necessary but sufficient for convergence, at least in case (b).

 In case (c) with oversmoothing penalty, the inequality \eqref{eq:regineq1} and consequently \eqref{eq:regineq2} are missing. Results of Proposition~\ref{pro:convergence}
and Corollary~\ref{cor:ball} in general do not apply in that case.
One cannot even show weak convergence $x_n \rightharpoonup \xdag$ in $X$, and regularized solutions $\xad$ need not belong to a ball $\mathcal{B}^X_r(\xdag)$ with small radius $r>0$ if $\delta>0$ is sufficiently small.
As will be shown in Proposition~\ref{pro:rate_c} of Section~\ref{sec:rates} (see also \cite{HofMat18,HofMat19}), convergence rates can be proven under stronger conditions also for (c), where we have some $0<p<1$ such that $\xdag \in X_p$. The key to these results was the appropriate choice of $\alpha$ either by an a priori or a posteriori parameter choice. In particular, $\frac{\delta^2}{\alpha}\rightarrow \infty$ as $\delta\rightarrow 0$, which violates \eqref{eq:conv}, is typical there. The interplay of $\alpha$ and $\delta$ will be in the focus of our numerical case studies in Section~\ref{sec:studies} below.

\section{Convergence rate results}
\label{sec:rates}

In this section, we are going to discuss convergence rate results for cases (a) and (b) on one hand, but also (c) on the other hand.
In addition to Assumption~\ref{ass:basic1} some versions of conditional stability estimates have to be imposed which, in combination with the smoothness assumptions $x^\dag\in X_p$, are essentially hidden forms of source conditions for the solution $\xdag$.

In Assumption~\ref{ass:basic2} we first consider the situation for the setting $Q:=\mathcal{B}^{X_1}_{\rho}(0)$. This model setting was comprehensively discussed and illustrated by examples of
associated nonlinear inverse problems in the papers \cite{ChengYamamoto00,EggerHof18,HofMat18,HofmannYamamoto10}. Here we have evidently $\xdag \in Q$ for the
cases (a) and (b) whenever $\|\xdag\|_1 \leq \rho$.
\begin{ass} \label{ass:basic2}
Let for fixed $a>0$ and $0<\gamma \le 1$  the conditional stability estimates
\begin{equation} \label{eq:stabb}
\|x-\xdag\|_{-a} \le K(\rho)\,\|F(x)-F(\xdag)\|_Y^\gamma  \quad  \mbox{for all} \;\; x\in \mathcal{B}^{X_1}_{\rho}(0) \cap \domain(F)
\end{equation}
hold, where constants $K(\rho)>0$ are supposed to exist for all radii $\rho>0$.
\end{ass}

Then the following proposition, which is a direct consequence of \cite[Theorem~2.1]{EggerHof18} when adapting the corresponding  proof, yields an order optimal convergence rate in case (a).

\begin{proposition} \label{pro:rate_a}
Under Assumptions~\ref{ass:basic1} and \ref{ass:basic2} and for $\xdag \in X_p$ with $1<p \le a+2$
we have the rate of convergence of regularized solutions $\xad \in \domain(F) \cap \domain(B)$ to the solution $\xdag\in \domain(F) \cap \domain(B)$ as
\begin{equation} \label{eq:rate_a}
\|\xad-\xdag\|_X = \mathcal{O}\left(\delta^\frac{\gamma p}{p+a}\right) \qquad \mbox{as} \quad \delta \to 0,
\end{equation}
provided that the regularization parameter $\alpha=\alpha(\delta)$ is chosen a priori as
\begin{equation} \label{eq:alpha_a}
\alpha(\delta)\sim \delta^{2-2\gamma\frac{p-1}{p+a}}.
\end{equation}
\end{proposition}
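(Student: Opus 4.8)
The plan is to adapt the proof of \cite[Theorem~2.1]{EggerHof18} to the present Hilbert scale setting with penalty index one, combining the minimizing property of $\xad$, the source representation contained in $\xdag \in X_p$, interpolation in the Hilbert scale, and the conditional stability estimate \eqref{eq:stabb}. First, from $T_\alpha^\delta(\xad) \le T_\alpha^\delta(\xdag)$ and \eqref{eq:noise} one gets $\|F(\xad)-\yd\|_Y^2 + \alpha\|\xad\|_1^2 \le \delta^2 + \alpha\|\xdag\|_1^2$, hence $\|\xad\|_1^2 \le \|\xdag\|_1^2 + \delta^2/\alpha$; since the exponent in \eqref{eq:alpha_a} is strictly less than $2$ for $p>1$, we have $\delta^2/\alpha \to 0$, so $\xad$ lies in a fixed ball $\mathcal{B}^{X_1}_{\rho}(0)$ for all sufficiently small $\delta$ and \eqref{eq:stabb} applies with a fixed constant $K=K(\rho)$. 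Writing $R:=\|\xad-\xdag\|_{-a}$, $E:=\|\xad-\xdag\|_1$, $D:=\|F(\xad)-\yd\|_Y$ and expanding $\|\xad\|_1^2-\|\xdag\|_1^2 = E^2 + 2\langle B\xdag, B(\xad-\xdag)\rangle_X$, the minimality inequality turns into $D^2 + \alpha E^2 \le \delta^2 - 2\alpha\langle B\xdag, B(\xad-\xdag)\rangle_X$.

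Next I would estimate the cross term by exploiting that $\xdag \in X_p$ means $\xdag = B^{-p}v$ with $\|v\|_X = \|\xdag\|_p$, which gives $\langle B\xdag, B(\xad-\xdag)\rangle_X = \langle v, B^{2-p}(\xad-\xdag)\rangle_X$ and hence $|\langle B\xdag, B(\xad-\xdag)\rangle_X| \le \|\xdag\|_p\,\|\xad-\xdag\|_{2-p}$. Because $1 < p \le a+2$ we have $-a \le 2-p < 1$, and $\xad-\xdag \in X_1$, so interpolation in the Hilbert scale (cf.\ \eqref{eq:interpol} and \cite{KreinPetunin66}), now between the indices $-a$ and $1$, yields $\|\xad-\xdag\|_{2-p} \le R^{\kappa}E^{1-\kappa}$ with $\kappa = \tfrac{p-1}{1+a} \in (0,1]$. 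Inserting this bound and absorbing a fraction of $\alpha E^2$ by Young's inequality, together with the identity $\tfrac{2\kappa}{1+\kappa} = \tfrac{2(p-1)}{p+a}$, gives
\[
  E \lesssim \frac{\delta}{\sqrt\alpha} + R^{\frac{p-1}{p+a}}, \qquad D \lesssim \delta + \sqrt\alpha\,R^{\frac{p-1}{p+a}}.
\]
(For $p = a+2$ one has $\kappa = 1$, the factor $E^{1-\kappa}$ drops out and the two bounds follow directly, so the saturation case is included.)

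Then I would feed the estimate for $D$ into the conditional stability estimate, $R \le K\,\|F(\xad)-F(\xdag)\|_Y^\gamma \le K\,(D+\delta)^\gamma$, and use $\gamma \le 1$ to obtain $R \lesssim \delta^\gamma + \alpha^{\gamma/2}R^{\sigma}$ with $\sigma := \tfrac{\gamma(p-1)}{p+a} \in (0,1)$. Since $\sigma < 1$, one more application of Young's inequality absorbs $R^\sigma$ and leaves $R \lesssim \delta^\gamma + \alpha^{\gamma/(2(1-\sigma))}$; with the a priori choice \eqref{eq:alpha_a}, that is $\alpha \sim \delta^{2(1-\sigma)}$, the second term is again $\mathcal{O}(\delta^\gamma)$, so $R = \mathcal{O}(\delta^\gamma)$. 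Back-substituting into the two bounds above gives $D = \mathcal{O}(\delta)$ and $E = \mathcal{O}(\delta^{\sigma})$. Finally, interpolating once more between $X_{-a}$ and $X_1$ at the index $0$, $\|\xad-\xdag\|_X \le R^{1/(1+a)}E^{a/(1+a)}$, and using the identity $\tfrac{\gamma + a\sigma}{1+a} = \tfrac{\gamma p}{p+a}$ one arrives at $\|\xad-\xdag\|_X = \mathcal{O}\bigl(\delta^{\gamma p/(p+a)}\bigr)$, which is \eqref{eq:rate_a}.

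The main obstacle is the self-referential character of the third step: conditional stability controls $R$ only through $D$, while $D$ itself is bounded only through $R$ (via the cross term), so one has to check that the exponents close and that the Young-type absorptions are legitimate — which is exactly where $\sigma < 1$ (equivalently $\gamma \le 1$) and $p \le a+2$ (the saturation bound, ensuring $2-p \ge -a$ and $\kappa \le 1$) enter, as does the precise bookkeeping that makes the balancing exponent in \eqref{eq:alpha_a} the right one.
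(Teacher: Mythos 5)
Your proposal is correct and follows essentially the same route as the paper, whose proof of Proposition~\ref{pro:rate_a} is simply an adaptation of \cite[Theorem~2.1]{EggerHof18}: minimality of $\xad$, the bound $\delta^2/\alpha\to 0$ placing $\xad$ in a fixed ball $\mathcal{B}^{X_1}_\rho(0)$ so that \eqref{eq:stabb} applies with fixed $K(\rho)$, the source representation $\xdag=B^{-p}v$ to handle the cross term, interpolation between the indices $-a$ and $1$, and two Young-inequality absorptions that close the circular dependence between $R$ and $D$. The exponent bookkeeping (in particular $\tfrac{2\kappa}{1+\kappa}=\tfrac{2(p-1)}{p+a}$, the role of $p\le a+2$, and the final identity $\tfrac{\gamma+a\sigma}{1+a}=\tfrac{\gamma p}{p+a}$) all checks out.
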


We easily see that the convergence results of Proposition~\ref{pro:convergence} apply here for $p>1$ and that in particular \eqref{eq:alpha_a} implies \eqref{eq:conv}.
The additional smoothness of $\xdag$, which is always required to obtain convergence rates in regularization of ill-posed problems appears in Hilbert scales in form $\xdag=B^{-p}v$ with some source element  $v \in X$. 

\begin{remark}
{\rm We mention that along the lines of \cite[Theorem~2.2]{EggerHof18} the rate \eqref{eq:rate_a} can also be shown under the assumptions of Proposition~\ref{pro:rate_a} when the regularization parameter $\alpha=\alpha(\delta,\yd)$
is chosen a posteriori by a sequential discrepancy principle.}\end{remark}

The modified version of the rate result for case (b) is as follows:

\begin{proposition} \label{pro:rate_b}
Under the Assumptions~\ref{ass:basic1} and \ref{ass:basic2} and for $\xdag \in X_1$
we have the rate of convergence of regularized solutions $\xad \in \domain(F) \cap \domain(B)$ to the solution $\xdag\in \domain(F) \cap \domain(B)$ as
\begin{equation} \label{eq:rate_b}
\|\xad-\xdag\|_X = \mathcal{O}\left(\delta^\frac{\gamma}{1+a}\right) \qquad \mbox{as} \quad \delta \to 0,
\end{equation}
if the regularization parameter $\alpha=\alpha(\delta)$ is chosen a priori as
\begin{equation} \label{eq:alpha_b}
\alpha(\delta) \sim \delta^{2}.
\end{equation}
\end{proposition}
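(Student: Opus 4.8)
The plan is to mimic the proof of Proposition~\ref{pro:rate_a} (i.e.\ the argument behind \cite[Theorem~2.1]{EggerHof18}), specializing to the matching-smoothness exponent $p=1$, and to track carefully why the choice $\alpha\sim\delta^2$ is the natural one here. First I would exploit the minimizing property of $\xad$ through the basic inequality \eqref{eq:regineq1}, which, since $\xdag\in X_1$ in case (b), is available. Expanding $T_\alpha^\delta(\xad)\le T_\alpha^\delta(\xdag)$ and using the noise bound \eqref{eq:noise} yields simultaneously the misfit bound $\|F(\xad)-\yd\|_Y^2\le \delta^2+\alpha\|\xdag\|_1^2$ and the penalty bound $\|\xad\|_1^2\le \|\xdag\|_1^2+\delta^2/\alpha$, exactly as in \eqref{eq:regineq2}. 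With $\alpha\sim\delta^2$ both right-hand sides are of order $\|\xdag\|_1^2+\mathcal{O}(1)$, hence uniformly bounded; in particular $\|\xad\|_1\le\rho$ for a fixed $\rho$ (depending on $\|\xdag\|_1$) once $\delta$ is small, so that $\xad\in\mathcal{B}^{X_1}_\rho(0)\cap\domain(F)$ and the conditional stability estimate \eqref{eq:stabb} from Assumption~\ref{ass:basic2} is applicable to $x=\xad$.

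Next I would combine the three ingredients: the conditional stability estimate, the misfit bound, and the interpolation inequality \eqref{eq:interpol}. From \eqref{eq:stabb} and $\|F(\xad)-F(\xdag)\|_Y\le \|F(\xad)-\yd\|_Y+\delta\le C\delta$ (using $\alpha\sim\delta^2$ in the misfit bound) we get the weak-norm estimate $\|\xad-\xdag\|_{-a}\le \tilde K\,\delta^\gamma$. Then I apply \eqref{eq:interpol} with $t=0$ and $p=1$, so that the exponents become $\frac{1}{1+a}$ on the $-a$-part and $\frac{a}{1+a}$ on the $1$-part:
\begin{equation*}
\|\xad-\xdag\|_X \le \|\xad-\xdag\|_{-a}^{\frac{1}{1+a}}\,\|\xad-\xdag\|_1^{\frac{a}{1+a}}.
\end{equation*}
The $1$-norm factor is controlled by the triangle inequality $\|\xad-\xdag\|_1\le \|\xad\|_1+\|\xdag\|_1$, which is uniformly bounded by the penalty bound above (this is where $\alpha\sim\delta^2$, rather than $\alpha\to0$ faster, is essential — a smaller $\alpha$ would let $\|\xad\|_1$ blow up). Inserting $\|\xad-\xdag\|_{-a}\le\tilde K\delta^\gamma$ gives $\|\xad-\xdag\|_X\le \bar K\,\delta^{\frac{\gamma}{1+a}}$, which is \eqref{eq:rate_b}.

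The only genuinely delicate point is verifying that $\xad$ lies in the validity region $\mathcal{B}^{X_1}_\rho(0)$ of the conditional stability estimate, i.e.\ that the radius $\rho$ can be chosen independently of $\delta$: this hinges precisely on the penalty bound $\|\xad\|_1^2\le\|\xdag\|_1^2+\delta^2/\alpha$ staying bounded, which forces $\delta^2/\alpha=\mathcal{O}(1)$ and thus explains the choice $\alpha\sim\delta^2$ (rather than the condition $\delta^2/\alpha\to0$ of \eqref{eq:conv}). Everything else is the routine chain ``basic inequality $\Rightarrow$ misfit and penalty bounds $\Rightarrow$ conditional stability $\Rightarrow$ interpolation.'' I would also remark that, unlike case (c), no two-sided stability estimate is needed: the one-sided inequality \eqref{eq:stabb} suffices because $\|\xdag\|_1<\infty$ keeps $\|\xad-\xdag\|_1$ bounded for free.
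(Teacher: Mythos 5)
Your proposal is correct and follows essentially the same route as the paper: the paper invokes the ``standard technique'' (basic inequality $T_\alpha^\delta(\xad)\le T_\alpha^\delta(\xdag)$, misfit and penalty bounds, conditional stability on a ball of $\delta$-independent $X_1$-radius) to obtain $\|\xad-\xdag\|_{-a}\le C\delta^\gamma$, and then concludes via the interpolation inequality with $t=0$ together with the uniform bound on $\|\xad\|_1$ from \eqref{eq:regineq2} — exactly the steps you spell out. Your closing observations (that $\alpha\sim\delta^2$ is forced by keeping $\delta^2/\alpha$ bounded, and that the validity region $\mathcal{B}^{X_1}_\rho(0)$ must admit constants $K(\rho)$ for the resulting radius) match the paper's own remarks.
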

\begin{proof}
By the standard technique of variational regularization under conditional stability estimates (cf.~\cite[Proof of Theorem 1.1]{EggerHof18} or \cite[Section~4.2.5]{ScKaHoKa12}) we obtain for
the choice \eqref{eq:alpha_b} of the regularization parameter and
by using the conditional stability estimate \eqref{eq:stabb} the inequality
\begin{equation} \label{eq:precond}
\|\xad-\xdag\|_{-a} \le C \delta^\gamma,
\end{equation}
where the constant $C>0$ via $\rho$ and $K(\rho)$ depends on $\|\xdag\|_1$ and on upper and lower bounds of $\delta^2/\alpha$.  Combining this with the interpolation inequality \eqref{eq:interpol}, taking $t=0$ and $s=1$, and applying the triangle inequality provides us with the estimate
$$ \|\xad-\xdag\|_X \le C (\|\xad\|_1+\|\xdag\|_1)^\frac{a}{1+a}\, \delta^\frac{\gamma}{1+a}.$$
Due to \eqref{eq:regineq2} the norm $\|\xad\|_1$ is uniformly bounded by a finite constant for $\alpha(\delta)$ from  \eqref{eq:alpha_b}. This yields the rate \eqref{eq:rate_b} and completes the proof.
Finally, we should note that the inequality \eqref{eq:precond} can only be established, because constants $K(\rho)>0$ in \eqref{eq:stabb} exist for arbitrarily large $\rho>0$.  
\end{proof}

In the borderline case (b) we have also a borderline a priori choice of the regularization parameter which contradicts the second limit condition in \eqref{eq:conv} such that the quotient $\frac{\delta^2}{\alpha}$ is uniformly bounded below by a positive constant and above by a finite constant.

In Assumption~\ref{ass:basic3} we consider alternatively the situation that $Q:=\mathcal{B}^{X}_{r}(\xdag)$. This model, which is illustrated by Example~\ref{xmpl:example1} in Section \ref{sec:examples} below, is typical for conditional stability estimates that arise from
nonlinearity conditions imposed on the forward operator $F$ in a neighbourhood of the solution $\xdag$. In this context, the radius $r>0$ which restricts the validity area of stability estimates can be
rather small. In all cases of the Case distinction we have here $\xdag \in Q \cap \domain(F)$, but only for (a) and (b) also $\xdag \in \domain(F) \cap \domain(B)$.
\begin{ass} \label{ass:basic3}
Let for fixed $a>0$ and $0<\gamma \le 1$  the conditional stability estimate
\begin{equation} \label{eq:stabbb}
\|x-\xdag\|_{-a} \le K(r)\,\|F(x)-F(\xdag)\|_Y^\gamma  \quad  \mbox{for all} \;\; x\in \mathcal{B}^{X}_{r}(\xdag) \cap \domain(F)
\end{equation}
hold, where the constant $K(r)>0$ depends on the largest admissible radius $r>0$.
\end{ass}

\begin{corollary} \label{cor:alternative}
The assertion of Proposition~\ref{pro:rate_a} remains true if Assumption~\ref{ass:basic2} is replaced with Assumption~\ref{ass:basic3}.
\end{corollary}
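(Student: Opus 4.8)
The plan is to re-run the proof of Proposition~\ref{pro:rate_a} verbatim, checking that the only place where Assumption~\ref{ass:basic2} is used — namely the application of the conditional stability estimate \eqref{eq:stabb} to the regularized solution $\xad$ — can be replaced by an application of \eqref{eq:stabbb}, and that the hypotheses needed to do so are already at hand. Concretely, the argument behind Proposition~\ref{pro:rate_a} produces, via the minimizing property \eqref{eq:regineq1}, the bound \eqref{eq:regineq2} on $\|\xad\|_1$ for the a priori choice \eqref{eq:alpha_a}, since \eqref{eq:alpha_a} implies $\delta^2/\alpha\to0$ and hence $\|\xad\|_1\le\sqrt{\|\xdag\|_1^2+\delta^2/\alpha}$ is uniformly bounded; this, together with $\|\xdag\|_p$ finite for $1<p\le a+2$, gives a uniform bound on $\|\xad\|_p$ by interpolation between $X_1$ and a source-type estimate (this is exactly the Hilbert-scale machinery already invoked for Proposition~\ref{pro:rate_a}). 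The misfit term is controlled by $\|F(\xad)-\yd\|_Y^2\le T_\alpha^\delta(\xdag)\le\delta^2+\alpha\|\xdag\|_1^2$, which for the choice \eqref{eq:alpha_a} is $\mathcal{O}(\delta^2)$ in the relevant regime, so $\|F(\xad)-F(\xdag)\|_Y=\mathcal{O}(\delta)$.

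The one genuinely new point is that \eqref{eq:stabbb} is only valid on the \emph{small} ball $\mathcal{B}^X_r(\xdag)$, not on an arbitrarily large $X_1$-ball. So before invoking \eqref{eq:stabbb} one must verify $\xad\in\mathcal{B}^X_r(\xdag)\cap\domain(F)$ for all sufficiently small $\delta$. But this is precisely the content of Corollary~\ref{cor:ball}: under the assumptions of Proposition~\ref{pro:convergence} — which hold here because \eqref{eq:alpha_a} satisfies \eqref{eq:conv} and we are in case (a) — the regularized solutions lie in $\mathcal{B}^{X}_r(\xdag)$ once $\delta$ is small enough. Hence for small $\delta$ the estimate \eqref{eq:stabbb} applies to $x=\xad$ and yields $\|\xad-\xdag\|_{-a}\le K(r)\,\|F(\xad)-F(\xdag)\|_Y^\gamma=\mathcal{O}(\delta^\gamma)$, which is the exact analogue of the intermediate bound \eqref{eq:precond}.

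From there the endgame is identical to the proofs of Propositions~\ref{pro:rate_a} and \ref{pro:rate_b}: feed $\|\xad-\xdag\|_{-a}=\mathcal{O}(\delta^\gamma)$ and the uniform bound on $\|\xad\|_p$ (hence on $\|\xad-\xdag\|_p$) into the interpolation inequality \eqref{eq:interpol} with $t=0$, obtaining
\begin{equation*}
\|\xad-\xdag\|_X\le\|\xad-\xdag\|_{-a}^{\frac{p}{p+a}}\,\|\xad-\xdag\|_p^{\frac{a}{p+a}}=\mathcal{O}\!\left(\delta^{\frac{\gamma p}{p+a}}\right),
\end{equation*}
which is the rate \eqref{eq:rate_a}. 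I expect the main (indeed the only) obstacle to be the localization step: one must be careful that Corollary~\ref{cor:ball} is genuinely available, i.e.\ that the chosen $\alpha(\delta)$ in \eqref{eq:alpha_a} does satisfy both limit conditions in \eqref{eq:conv} — it does, since $2-2\gamma\frac{p-1}{p+a}>0$ and, because $p>1$ forces $\gamma\frac{p-1}{p+a}>0$, the exponent is strictly less than $2$, so $\alpha\to0$ and $\delta^2/\alpha\to0$. Everything else is a routine transcription of the case-(a) argument with $\mathcal{B}^{X_1}_\rho(0)$ replaced by $\mathcal{B}^X_r(\xdag)$ and $K(\rho)$ replaced by $K(r)$.
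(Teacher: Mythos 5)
Your proposal is correct and follows essentially the same route as the paper: the single new ingredient is the localization step, i.e.\ observing that the a priori choice \eqref{eq:alpha_a} satisfies \eqref{eq:conv} in case (a), so Corollary~\ref{cor:ball} places $\xad$ in $\mathcal{B}^X_r(\xdag)$ for small $\delta$, after which \eqref{eq:stabbb} can be applied and the rate follows as in the proof of Proposition~\ref{pro:rate_a}. The additional detail you supply for the endgame is just a transcription of that cited argument, which the paper likewise defers to the proof of \cite[Theorem~2.1]{EggerHof18}.
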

\begin{proof}
To see the validity of Proposition~\ref{pro:rate_a} under Assumption~\ref{ass:basic3} in case (a) of the Case distinction, where the regularization parameter choice satisfies \eqref{eq:conv},
it is enough to take the assertion of Corollary~\ref{cor:ball} into account. This assertion implies that for sufficiently small $\delta>0$ the regularized solutions $\xad$ belong to the
ball $\mathcal{B}^X_r(\xdag)$ for prescribed $r>0$. Then the conditional stability estimate \eqref{eq:stabbb} applies and yields the convergence rate \eqref{eq:rate_a} along the lines of the
proof of \cite[Theorem~2.1]{EggerHof18}. 
\end{proof}
In case (b), however, for the choice \eqref{eq:alpha_b} of Proposition~\ref{pro:rate_b} the condition \eqref{eq:conv} fails and even if $\delta>0$ is sufficiently small, it cannot be shown that
 $\xad \in  \mathcal{B}^{X}_{r}(\xdag)$ for prescribed $r>0$. Consequently, the conditional stability estimate \eqref{eq:stabbb} need not hold for the regularized solutions $x=\xad$
and  the rate assertion \eqref{eq:rate_b} of Proposition~\ref{pro:rate_b} is only valid under Assumption~\ref{ass:basic3} if constants $K(r)>0$ in \eqref{eq:stabbb} exist for arbitrarily large $r>0$. This is, however, the case in the exponential growth model of Example~\ref{xmpl:example1} below.

Now we turn to the cases with oversmoothing penalty, where $\xdag \notin X_1$ and restrict ourselves to $\gamma=1$ in the conditional stability estimates. As is well-known since the paper by Natterer \cite{Natterer84}, convergence rates in this case require lower and upper estimates of $\|F(x)-F(\xdag)\|_Y$ by multiples of the term $\|x-\xdag\|_{-a}$.
We start with a corresponding analytical result. The goal of the case studies in Section~\ref{sec:studies} below is to gain further insight into the behavior of regularized solutions in case (c) for a priori and a posteriori choices of the regularization parameter.

\begin{ass} \label{ass:basic_OS}
Let $a>0$. Moreover, let $x^\dag$ be an interior point of $\domain(F)$ such that for the radius $r>0$ we have $\mathcal{B}^X_r(\xdag)\subset \domain(F)$  and the two estimates
\begin{equation} \label{eq:stabbb1}
\underline{K}\,\|x-\xdag\|_{-a} \le \|F(x)-F(\xdag)\|_Y  \quad  \mbox{for all} \;\; x\in \domain(F)\cap \domain(B)=\domain(F)\cap X_1
\end{equation}
and
\begin{equation} \label{eq:stabbbb}
\|F(x)-F(\xdag)\|_Y\le \overline{K}\,\|x-\xdag\|_{-a}  \quad  \mbox{for all} \;\; x\in \mathcal{B}^X_r(\xdag) \cap X_1
\end{equation}
hold true, where $0<\underline{K}\le \overline{K}<\infty$ are constants.
\end{ass}

\begin{proposition} \label{pro:rate_c}
Let $\xdag \in X_p$ for some $0<p<1$, but $\xdag \notin X_1$. Under the Assumptions~\ref{ass:basic1} and \ref{ass:basic_OS} we then
have the rate of convergence of regularized solutions to the exact solution as
\begin{equation} \label{eq:rate_c}
\|x_{\alpha_*}^\delta-\xdag\|_X = \mathcal{O}\left(\delta^\frac{p}{p+a}\right) \qquad \mbox{as} \quad \delta \to 0,
\end{equation}
if the regularization parameter is chosen a priori as
\begin{equation}\label{eq:alpha_c}
\alpha_*=\alpha(\delta) = \delta^{2-2\gamma\frac{p-1}{p+a}}.
\end{equation}
\end{proposition}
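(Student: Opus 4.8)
The plan is to follow the route that has become standard for oversmoothing penalties (in the spirit of Natterer~\cite{Natterer84} and Hofmann--Math\'e~\cite{HofMat18,HofMat19}); the crucial point is that in case~(c) the comparison inequality \eqref{eq:regineq1}, and with it the uniform bound \eqref{eq:regineq2}, is \emph{not} available, so the whole argument has to be organized around an auxiliary \emph{smoothed} version of $\xdag$. Since $\xdag\in X_p$ we may write $\xdag=B^{-p}v$ with $v:=B^p\xdag\in X$; let $\{E_\lambda\}_{\lambda\ge c_B}$ be the spectral family of $B$ and set, for a cut-off level $\mu=\mu(\delta)>0$ to be fixed below,
\[
  x_\mu:=\int_{c_B}^{\mu}\!dE_\lambda\,\xdag\ \in X_1 .
\]
Elementary spectral calculus then yields the three bias estimates
\[
  \|x_\mu-\xdag\|_{-a}\le \mu^{-(p+a)}\|v\|,\qquad
  \|x_\mu-\xdag\|_X\le \mu^{-p}\|v\|,\qquad
  \|Bx_\mu\|_X\le \mu^{\,1-p}\|v\| .
\]
In particular $\|x_\mu-\xdag\|_X\to 0$ as $\mu\to\infty$, so for all sufficiently small $\delta$ (hence large $\mu$) we have $x_\mu\in\mathcal{B}^X_r(\xdag)\cap X_1\subset\domain(F)\cap X_1$, which makes the upper stability bound \eqref{eq:stabbbb} applicable to $x=x_\mu$.

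Next I would use $x_\mu$ as a competitor in \eqref{eq:tikhonov}: by minimality $T^\delta_{\alpha_*}(\xad)\le T^\delta_{\alpha_*}(x_\mu)$, and by \eqref{eq:noise}, \eqref{eq:stabbbb} and the spectral bounds,
\[
  T^\delta_{\alpha_*}(x_\mu)\le\bigl(\overline{K}\,\|v\|\,\mu^{-(p+a)}+\delta\bigr)^2+\|v\|^2\,\alpha_*\,\mu^{2(1-p)} .
\]
Choosing $\mu=\delta^{-1/(p+a)}$ and recalling that $\gamma=1$, so that $\alpha_*=\delta^{\,2-2\gamma(p-1)/(p+a)}=\delta^{\,2(a+1)/(p+a)}$, a short exponent count shows that \emph{both} summands are $\mathcal{O}(\delta^2)$ (the first because $\mu^{-(p+a)}=\delta$, the second because $\alpha_*\mu^{2(1-p)}=\delta^2$); hence $T^\delta_{\alpha_*}(x_\mu)=\mathcal{O}(\delta^2)$. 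Splitting $T^\delta_{\alpha_*}(\xad)$ into its two nonnegative parts then gives simultaneously
\[
  \|F(\xad)-\yd\|_Y=\mathcal{O}(\delta)\qquad\text{and}\qquad
  \|\xad\|_1=\|B\xad\|_X\le \alpha_*^{-1/2}\,T^\delta_{\alpha_*}(x_\mu)^{1/2}=\mathcal{O}\!\bigl(\delta^{-(1-p)/(p+a)}\bigr).
\]
The first relation, together with \eqref{eq:noise} and the \emph{lower} stability bound \eqref{eq:stabbb1} (applicable since $\xad\in\domain(F)\cap X_1$), gives $\|\xad-\xdag\|_{-a}=\mathcal{O}(\delta)$. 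It is precisely here that the two-sidedness of Assumption~\ref{ass:basic_OS} is indispensable: since $\|\xad\|_1$ is genuinely unbounded in case~(c), the $X_{-a}$-error of $\xad$ cannot be recovered from any penalty comparison and has to be read off the residual via the lower bound.

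Finally I would \emph{not} interpolate $\xad-\xdag$ directly --- its $X_1$-norm blows up --- but decompose $\xad-\xdag=(\xad-x_\mu)+(x_\mu-\xdag)$. The difference $\xad-x_\mu$ lies in $X_1$, so the interpolation inequality \eqref{eq:interpol}, used between the indices $-a$ and $1$ with $t=0$, gives $\|\xad-x_\mu\|_X\le\|\xad-x_\mu\|_{-a}^{1/(1+a)}\,\|\xad-x_\mu\|_1^{a/(1+a)}$, and by the triangle inequality both factors are already under control: $\|\xad-x_\mu\|_{-a}=\mathcal{O}(\delta)$ and $\|\xad-x_\mu\|_1\le\|\xad\|_1+\|x_\mu\|_1=\mathcal{O}\!\bigl(\delta^{-(1-p)/(p+a)}\bigr)$. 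Since $\tfrac{1}{1+a}-\tfrac{a(1-p)}{(1+a)(p+a)}=\tfrac{p}{p+a}$, this produces $\|\xad-x_\mu\|_X=\mathcal{O}\!\bigl(\delta^{p/(p+a)}\bigr)$, and as $\|x_\mu-\xdag\|_X\le\|v\|\,\mu^{-p}=\|v\|\,\delta^{p/(p+a)}$ as well, adding the two pieces yields the claimed rate \eqref{eq:rate_c}. The genuine difficulty is organizational rather than technical: one must choose the cut-off level $\mu(\delta)$ so that the bias $\|x_\mu-\xdag\|_{-a}$, the propagated noise $\delta$ and the penalty contribution $\alpha_*\|Bx_\mu\|_X^2$ all equilibrate at order $\delta^2$, and one must route the concluding interpolation through $\xad-x_\mu$ (which stays in $X_1$) rather than through $\xad-\xdag$; once the auxiliary element $x_\mu$ is in place, the remainder is bookkeeping with exponents.
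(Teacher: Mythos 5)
Your argument is correct, and it follows the same overall architecture as the paper's appendix proof: introduce a smoothed auxiliary element in $X_1$, use it as a competitor in the Tikhonov functional together with the upper stability bound \eqref{eq:stabbbb} to get $\|F(\xad)-\yd\|_Y=\mathcal{O}(\delta)$ and $\|B\xad\|_X=\mathcal{O}(\delta/\sqrt{\alpha_*})$, read off the $X_{-a}$-error from the residual via the lower bound \eqref{eq:stabbb1}, and finish by interpolation. There are, however, two genuine technical differences. First, the paper's auxiliary element is the minimizer $\xa$ of the artificial functional $\|x-\xdag\|_{-a}^2+\alpha\|Bx\|_X^2$, whose properties are imported as a lemma from \cite{HofMat18,HofMat19}; your spectral cut-off $x_\mu=E_\mu\xdag$ with $\mu=\delta^{-1/(p+a)}$ is more elementary and self-contained, since the three bias bounds follow from one-line spectral calculus, at the modest price of having to tune the extra parameter $\mu$ by hand rather than letting it be absorbed into $\alpha$. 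Second, the concluding step is organized differently: the paper first proves the uniform bound $\|\xadast-\xdag\|_p\le\tilde E$ (by interpolating $\xadast-\xaast$ between the indices $-a$ and $1$ and adding $\|\xaast-\xdag\|_p\le E$) and then interpolates $\xadast-\xdag$ between $-a$ and $p$, whereas you interpolate $\xad-x_\mu$ between $-a$ and $1$ directly and add the bias $\|x_\mu-\xdag\|_X\le\|v\|\delta^{p/(p+a)}$; your exponent identity $\tfrac{1}{1+a}-\tfrac{a(1-p)}{(1+a)(p+a)}=\tfrac{p}{p+a}$ is the same cancellation the paper exploits, just performed in one step instead of two. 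Both routes use the upper bound only on the auxiliary element (which sits in $\mathcal{B}^X_r(\xdag)\cap X_1$ for small $\delta$) and the lower bound on all of $\domain(F)\cap X_1$, which is exactly the asymmetry the paper emphasizes; your version arguably makes the role of the smoothing scale $\mu(\delta)$ more transparent, while the paper's version additionally delivers the intermediate assertion $\|\xadast-\xdag\|_p\le\tilde E$, which is of independent interest.
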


\smallskip

The proof of Proposition~\ref{pro:rate_c} is given in the appendix along the lines of \cite[Theorem~1]{HofMat19}, where we set for simplicity $\bar x=0$. Note that Theorem~1 in \cite{HofMat19} refers to a simplified version
of the pair of estimates  \eqref{eq:stabbb1} and \eqref{eq:stabbbb}, which are ibid both assumed to hold for all $x \in \domain(F)$. As the proof in the appendix shows, the upper estimate \eqref{eq:stabbbb} is only exploited by
auxiliary elements $\xa$, which belong to $\mathcal{B}^X_r(\xdag) \cap X_1$ for sufficiently small $\alpha>0$. On the other hand, there are no arguments for restricting the noisy regularized solutions $\xad$ to small balls.
Consequently, the lower estimate \eqref{eq:stabbb1} needs to hold for all elements in $\domain(F)\cap X_1$. This is an essential drawback for the application of Proposition~\ref{pro:rate_c} to practical problems.    
An analogue of Proposition~\ref{pro:rate_c} for the discrepancy principle as parameter choice rule can be
formulated and proven along the lines of the paper \cite{HofMat18}.

As already mentioned in Section \ref{sec:conv}, we stress again that, despite the assertion of Proposition \ref{pro:rate_c}, norm convergence of regularized solutions cannot be shown in general for case (c), not even weak convergence in $X$ can be established. Evidently the parameter choice \eqref{eq:alpha_c} violates \eqref{eq:conv} since we have
$$\alpha(\delta) \to 0 \qquad \mbox{and} \qquad \frac{\delta^2}{\alpha(\delta)} = \delta^{\frac{2(p-1)}{p+a}} \to \infty \qquad \mbox{as} \qquad \delta \to 0.$$ It appears that $$\alpha(\delta) \to 0 \qquad \mbox{and} \qquad \frac{\delta^2}{\alpha(\delta)} \to \infty \qquad \mbox{as} \qquad \delta \to 0$$ tends to be the typical situation in the oversmoothing case (c), at least for regularization parameters yielding optimal convergence rates. Numerical case studies below support this conjecture. A similar behavior of the regularization parameters was noted for oversmoothing $\ell^1$-regularization \cite{GerthHofmann19}.

To conclude and summarize this section, we stress that in all cases of the Case distinction, we have under the appropriate conditional stability assumption (to show the similarities between the cases, we fix $\gamma=1$ for (a), (b), and (c) for the next  assertion) and for $x^\dag\in X_p$ for some $p>0$ the convergence rate
\begin{equation}\label{eq:rate_summary}
\|\xad-\xdag\|_X = \mathcal{O}\left(\delta^\frac{p}{p+a}\right) \qquad \mbox{as} \quad \delta \to 0
\end{equation}
under both the discrepancy principle and the a priori parameter choice
\begin{equation}\label{eq:Nattererlike}
\alpha(\delta) = \delta^{2-2\frac{p-1}{p+a}}=\delta^{\frac{2(a-1)}{a+p}}.
\end{equation}
Hence, we obtain the same parameter choice and the same convergence rate as in the case of a linear operator. Namely in \cite{Natterer84}, \eqref{eq:rate_summary} and \eqref{eq:Nattererlike} were obtained for a linear operator $A:X\rightarrow Y$ under a two-sided inequality 
$$ \underline K \|x\|_{-a} \le \|Ax\|_Y \le \overline K \|x\|_{-a} \quad  \mbox{for all} \;\; x\in X,$$
in analogy to the estimates from Assumption \ref{ass:basic_OS}.

\section{Examples}
\label{sec:examples}
In the following, we introduce two nonlinear inverse problems of type \eqref{eq:opeq}, for which we will investigate the analytic results from the previous section numerically. Before doing so, we will introduce two similar, but different Hilbert scales used as penalty in the minimization problem \eqref{eq:tikhonov} and as measure of the solution smoothness.
On one hand, we consider the standard Sobolev-scale $H^p[0,1]$. For integer values of $p\geq 0$, these function spaces consist of functions whose $p$-th derivative is still in $L^2(0,1)$. For real parameters of $p>0$, the spaces can be defined by an interpolation argument \cite{Adams}. Using Fourier-analysis, one can define a norm in $H^p[0,1]$ via
\begin{equation}\label{eq:hsnorm}
\| x\|_{H^p[0,1]}^2:=\int_\mathbb{R} (1+|\xi|^2)^p\,|\hat x(\xi)|^2\,d\xi,
\end{equation}
where $\hat x$ is the Fourier-transform of $x$. Then $x\in H^p[0,1]$ iff $\|x\|_{H^p[0,1]}<\infty$. The Sobolev scale for $p\geq 0$ does not constitute a Hilbert scale in the strict sense, but for each $0<p^\ast<\infty$ there is an operator $B:L^2(0,1)\to L^2(0,1)$ such that $\{X_p\}_{0\leq p\leq p^\ast}$ is a Hilbert scale \cite{Neubauer88}. This is not an issue in numerical experiments. Note that the norm \eqref{eq:hsnorm} is easy to implement, in particular it allows a precise gauging of the solution smoothness.

The reason why the Sobolev scale does not form a Hilbert scale for arbitrary values of $p$ lies in the boundary values. In order to generate a full Hilbert scale $\{X_\tau\}_{\tau \in \mathbb{R}}$, we exploit the simple integration operator
\begin{equation}\label{eq:J}
[Jh](t):=\int_0^t h(\tau)d\tau \quad (0 \le t \le 1)
\end{equation}
of Volterra-type mapping in $X=Y=L^2(0,1)$ and set
\begin{equation} \label{eq:altB}
B:=(J^*J)^{-1/2}.
\end{equation}
By considering the Riemann-Liouville fractional integral operator $J^p$ and its adjoint $(J^*)^p=(J^p)^*$ for $0<p \le 1$ we have that 
\[
X_p=\domain(B^{p})=\mathcal{R}((J^*J)^{p/2})=\mathcal{R}((J^*)^p),
\]
cf.~\cite{GorLuYam15,GorYam99,PlatoMatheHofmann18}, and hence by
\cite[Lemma~8]{GorYam99}
\begin{equation} \label{eq:fracrange}
X_p=\left\{\;\begin{array}{ccc} H^p[0,1] &\quad \mbox{for}\quad & 0<p<\frac{1}{2}\\ \{x \in H^{\frac{1}{2}}[0,1]:\int\limits_0^1 \frac{|x(t)|^2}{1-t}dt<\infty \}&\quad\mbox{for}\quad& p=\frac{1}{2}
\\ \{x \in H^p[0,1]:\, x(1)=0\}   &\quad \mbox{for} \quad& \frac{1}{2}<p \le 1 \end{array} \right.,
\end{equation}
where the fractional Sobolev spaces $H^p[0,1]$ occur. One can also show that
\begin{equation} \label{eq:fracrange1}
X_p= \{x \in H^p[0,1]:\, x(1)=0\}   \quad \quad \mbox{for} \quad  1<p < \frac{3}{2}.
\end{equation}
On the other hand, it is well-known that $X_2 \subset X_p \subset X_1$ for $1<p<2$ and that $X_2$ is characterized by
\begin{equation} \label{eq:fracrange2}
 X_2= \{x \in H^2[0,1]:\; x^\prime(0)=0,\;\;x(1)=0\}
\end{equation}
in an explicit manner, see for example \cite[Beispiel 2.1.5]{Louis89}.
We omit discussing higher smoothness spaces since we will not consider those in our examples. In the following we present examples that show and illustrate the occurrence of the cases (a), (b) and (c) of the Case distinction in Section~\ref{sec:intro}. Note that for $0<p<\frac{1}{2}$ the Sobolev-scale $H^p[0,1]$ and the Hilbert scale $\{X_p \}_{p>0}$ induced by $J$ coincide.

\begin{mprobl}[Exponential growth model]\label{xmpl:example1}\rm
The following exponential growth model has been previously discussed in the literature, and we refer for more details and properties to \cite[Section 3.1]{Groe93} and \cite{Hof98}.
To identify the time dependent growth rate $x(t) \; (0 \leq t \leq T)$ of a population we use observations $y(t) \; (0 \leq t \leq T)$ of the time-dependent size of the population with initial size $y(0)=y_0 > 0$, where
the O.D.E.~initial value problem
\begin{equation*}
y'(t)=x(t)\,y(t) \quad (0 < t \leq T), \qquad y(0)=y_0,
\end{equation*}
is assumed to hold. For simplicity let in the sequel $T:=1$ and consider the space setting $X=Y:=L^2(0,1)$. Then we simply derive the nonlinear forward operator $F: x \mapsto y$ mapping in the real Hilbert space $L^2(0,1)$ as
\begin{equation}\label{eq:fw-op1}
[F(x)](t)= y_0\, \exp \; \left( \int_{0}^{t} x(\tau) d\tau\right)\qquad (0 \leq t \leq 1),
\end{equation}
with full domain $\domain(F)=L^2(0,1)$ and with the Fr\'echet derivative
$$[F^\prime (x) h](t)=[F(x)](t) \int_0^t h(\tau)d\tau  \quad (0 \le t \le 1,\;\; h \in X). $$
It can be shown that there is some constant $\hat K>0$ such that for all $x \in X$ the inequality
\begin{equation} \label{eq:nldegree}
  \|F(x)-F(\xdag)-F^\prime(\xdag)(x-\xdag)\|_{Y} \le
  \hat K\,\|F(x)-F(\xdag)\|_{Y}\,\|x-\xdag\|_X
\end{equation}
is valid. By applying the triangle inequality to (\ref{eq:nldegree}) we obtain the estimate
\begin{equation} \label{eq:nldegree1}
 \|F^\prime(\xdag)(x-\xdag)\|_{Y}\le (\hat K\,\|x-\xdag\|_X+1)\,\|F(x)-F(\xdag)\|_{Y} \le \check{K}(r) \,\|F(x)-F(\xdag)\|_{Y}
\end{equation}
for all $x \in \mathcal{B}^X_r(\xdag)$, where the constant $\check{K}(r)>0$ attains the form $ \check{K}(r):=r\hat K+1$ for arbitrary $r>0$.

Using the Hilbert scale generated by the operator $J$ from \eqref{eq:J}, taking into account that $\|Jh\|_Y=\|(J^*J)^{1/2}h\|_X=\|B^{-1}h\|_X=\|h\|_{-1}$ for all $h \in X$, and that there is some $0<\underline c<\infty$ such that $\underline c \le [F(\xdag)](t)\;(0 \le t \le 1)$ for the multiplier function in $F^\prime(\xdag)$,
there is a constant $0<c_0<\infty$ satisfying
$$c_0\,\|x-\xdag\|_{-1} =c_0\,\|J(x-\xdag)\|_Y \le \|F^\prime(\xdag)(x-\xdag)\|_Y \qquad \mbox{for all}\quad x \in X. $$
This implies by formula \eqref{eq:nldegree1} the estimate
\begin{equation} \label{eq:upexample}
\|x-\xdag\|_{-1} \le K(r)\,\|F(x)-F(\xdag)\|_{Y}  \qquad \mbox{for all} \quad x \in \mathcal{B}^X_r(\xdag).
\end{equation}
This estimate is of the form \eqref{eq:stabbb} with $a:=1$ and $K(r):=\frac{r \hat K+1}{c_0}$. But it is specific for this example that there exist constants $K(r)>0$ for arbitrarily large radii $r>0$ such that
\eqref{eq:upexample} is valid.

The case (a) of the Case distinction in Section~\ref{sec:intro} occurs due to formula \eqref{eq:fracrange1} if the solution is sufficiently smooth, i.e.~$\xdag \in H^p[0,1]$ for some $p>1$ and, for the Hilbert scale induced by $J$, it fulfills the necessary boundary conditions. Case (b) will be the subject of Model problem \ref{xmpl:example3} below. The oversmoothing case (c) of the Case distinction occurs either if the solution is insufficiently smooth, i.e.~$\xdag \in H^p[0,1]$ for $0<p<1$, or in case of the Hilbert scale induced by $J$, one might have $\xdag \in H^p[0,1]$ for $p\geq 1$ but the boundary condition $\xdag(1)=0$ fails. Due to formula \eqref{eq:fracrange} we then have $\xdag \in X_p$ for all $p<1/2$, but $\xdag \notin X_p$ for all $p>1/2$ and consequently also $\xdag \notin X_1$. This is, for example, the case for the constant function $\xdag(t)=1\;(0 \le t \le 1)$. We complete this example with the remark that due to formula \eqref{eq:fracrange2} a function $\xdag \in H^2[0,1]$, like the function $\xdag(t)=-(t-0.5)^2+0.25$ used in the case studies below and satisfying $\xdag(1)=0$, does not belong to $X_2$ whenever its first derivative at $t=0$ does not vanish.
\end{mprobl}

\begin{mprobl}[Autoconvolution]\label{xmpl:example2}\rm
As a second problem, we consider under the same space setting $X=Y:=L^2(0,1)$ the autoconvolution operator on the unit interval defined as
\begin{equation}\label{eq:fw-op20}
[F(x)](s)=  \int_{0}^{s} x(s-t)x(t) dt \quad (0 \le s \le 1),
\end{equation}
with full domain $\domain(F)=L^2(0,1)$. This operator and the associated nonlinear operator equation \eqref{eq:opeq} with applications in statistics and physics have been discussed early in the literature of inverse
problems (cf.~\cite{GoHo94}). Due to extensions in laser optics, the deautoconvolution problem was comprehensively revisited recently (see, e.g.,~\cite{BuerHof15} and \cite{Flemmingbuch18}).
Even though $F$ from \eqref{eq:fw-op20} is a non-compact operator,
we have for all $x \in X$ a compact Fr\'echet derivative
$$[F^\prime (x) h](s)=2\, \int_0^s x(s-t)\,h(t)d t  \quad (0 \le s \le 1,\;\; h \in X). $$
Taking the Hilbert scale $\{X_\tau\}_{\tau \in \mathbb{R}}$ based on the operator $B$ from \eqref{eq:altB} and the integral operator $J$ from \eqref{eq:J}, we see
for the specific solution
\begin{equation} \label{eq:const1}
\xdag(t)=1\quad (0 \le t \le 1)
\end{equation}
that
$$\|F^\prime(\xdag) h\|_Y =2\|Jh\|_Y=2\|B^{-1}h\|_X=\|h\|_{-1} \qquad \mbox{for all}\quad h \in X. $$
Unfortunately no estimate of the form \eqref{eq:nldegree1} is available, because such estimates with $F$-differences on the right-hand side are not known for the autoconvolution operator.
However, as a condition characterizing the nonlinearity of $F$ the inequality
$$\|F(x)-F(\xdag)-F^\prime(\xdag)(x-\xdag)\|_{Y}= \|F(x-\xdag)\|_{Y} \le \|x-\xdag\|_X^2  \qquad \mbox{for all} \quad x \in X$$
is valid. Thus we have for all $x \in X$ and $\xdag$ from \eqref{eq:const1}, by using the triangle inequality,
$$\|x-\xdag\|_{-1} \le \frac{1}{2}\|F(x)-F(\xdag)-F^\prime(\xdag)(x-\xdag)\|_{Y}+  \frac{1}{2}\|F(x)-F(\xdag)\|_{Y}$$
$$\le  \frac{1}{2}\|F(x)-F(\xdag)\|_{Y}+  \frac{1}{2}\|x-\xdag\|^2_{X}. $$
Using the interpolation inequality \eqref{eq:interpol} in the form
$$\|h\|_X^2 \le \|h\|_{-1}\|h\|_1 \qquad \mbox{for all}\quad h \in X_1 $$
we derive for $x-\xdag \in X_1$ the inequality
$$\|x-\xdag\|_{-1} \le \frac{1}{2}\|F(x)-F(\xdag)\|_Y +  \frac{1}{2}\|x-\xdag\|_{1}\|x-\xdag\|_{-1}$$
and, if moreover $\|x-\xdag\|_{1} \le \kappa<2$, even the conditional stability estimate
\begin{equation} \label{eq:kappa}
\|x-\xdag\|_{-1} \le \frac{1}{2-\kappa}\,\|F(x)-F(\xdag)\|_Y \qquad \mbox{for all}\quad x-\xdag \in \mathcal{B}^{X_1}_\kappa(0).
\end{equation}
The estimate \eqref{eq:kappa}  can only unfold a stabilizing effect if  approximate solutions $x$ are such that $x-\xdag \in \mathcal{B}^{X_1}_\kappa(0)$ for some $\kappa<2$.
For $\xdag$ from \eqref{eq:const1} with $\xdag(1)=1 \not=0$ we have $\xdag \notin X_1$, but regularized solutions $x=\xad$ solving the extremal problem \eqref{eq:tikhonov} have by definition the
property $\xad \in X_1$, which implies that $\xad-\xdag \notin X_1$. This is a pitfall, because convergence assertions for $\xad$ as $\delta \to 0$ are missing in case (c) and thus the behaviour of $\xad$
remains completely unclear.
\end{mprobl}

\begin{mprobl}[Situation of $\xdag$ meeting case (b)] \label{xmpl:example3}\rm

It is not straight forward to construct an example for case (b) of Case distinction. We base our construction on the observation that the series $\sum\limits_{n=2}^\infty \frac{1}{n (\log n)^2}$ is convergent, i.e.~it characterizes a finite value, whereas the series $\sum\limits_{n=2}^\infty \frac{n^\varepsilon}{n (\log n)^2}$ is divergent for all
$\varepsilon>0$, i.e.~we have $\sum\limits_{n=2}^\infty \frac{n^\varepsilon}{n (\log n)^2}=\infty$. In order to be able to use the model operators and the Hilbert scale introduced before, we use the following integral formulation.

\begin{lemma}
The improper integral $\int_2^\infty \frac{1}{x^\eta \log^2(x)}\,dx$ converges for $\eta\geq 1$ and diverges for $\eta<1$.
\end{lemma}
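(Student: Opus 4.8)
The plan is to split the range of the parameter into the three cases $\eta>1$, $\eta=1$, and $\eta<1$, and to treat each by an elementary comparison argument. Throughout, note first that the integrand $f_\eta(x):=x^{-\eta}\log^{-2}x$ is continuous and strictly positive on $[2,\infty)$, so that $t\mapsto\int_2^t f_\eta(x)\,dx$ is increasing and the improper integral is well defined as an element of $[0,+\infty]$; hence it suffices in each case to produce a convergent majorant or a divergent minorant.

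For the borderline exponent $\eta=1$ I would simply evaluate the integral explicitly: the substitution $u=\log x$, $du=dx/x$, transforms $\int_2^\infty \frac{dx}{x\log^2 x}$ into $\int_{\log 2}^{\infty}u^{-2}\,du=1/\log 2<\infty$, which gives convergence. For $\eta>1$ I would use that $\log^2 x\ge\log^2 2>0$ for every $x\ge 2$, so that $f_\eta(x)\le(\log 2)^{-2}\,x^{-\eta}$ on $[2,\infty)$; since $\int_2^\infty x^{-\eta}\,dx<\infty$ for $\eta>1$, the comparison test yields convergence. For $\eta<1$ I would instead bound $f_\eta$ from below: because $1-\eta>0$, the standard growth comparison gives $x^{1-\eta}/\log^2 x\to\infty$ as $x\to\infty$, so there exists $x_0\ge 2$ with $x^{1-\eta}\ge\log^2 x$, equivalently $f_\eta(x)\ge 1/x$, for all $x\ge x_0$; since $\int_{x_0}^{\infty}dx/x=\infty$, the integral diverges. (The same minorant argument also covers all $\eta\le 0$, where $x^{-\eta}\ge 1$.)

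The argument is entirely routine, and I do not expect any real obstacle. The only step worth stating rather than just invoking is the limit $x^{1-\eta}/\log^2 x\to\infty$ used in the divergence case; this is the familiar fact that any fixed positive power of $x$ eventually dominates any fixed power of $\log x$, which one obtains in one line either by L'Hôpital's rule or by the substitution $x=e^{t}$.
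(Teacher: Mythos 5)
Your proof is correct, but it takes a genuinely different route from the paper. The paper proves the lemma by exhibiting an explicit antiderivative in terms of the exponential integral $E(z)=\int_z^\infty \frac{e^{-t}}{t}\,dt$, namely $(1-\eta)E((1-\eta)\log x)-\frac{x^{1-\eta}}{\log x}+C$, and then reading off convergence or divergence from the limit of this expression as $x\to\infty$. You instead split into the three regimes $\eta>1$, $\eta=1$, $\eta<1$ and argue by comparison: a convergent majorant $(\log 2)^{-2}x^{-\eta}$ for $\eta>1$, the exact substitution $u=\log x$ for the borderline case $\eta=1$, and the eventual minorant $1/x$ for $\eta<1$ obtained from $x^{1-\eta}/\log^2 x\to\infty$. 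Your argument is entirely elementary and self-contained: it avoids introducing a special function, and it does not require the reader to verify the antiderivative formula or to know the asymptotics (and sign conventions) of the exponential integral at large argument --- points on which the paper's one-line computation is rather terse. What the paper's approach buys is compactness and a single closed-form expression covering all $\eta$ at once, at the cost of invoking $E$ and its limiting behavior without proof. Both proofs establish exactly the claimed dichotomy, and your remark that the minorant argument also covers $\eta\le 0$ is a harmless bonus, since that range is already included in $\eta<1$.
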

\begin{proof}
It is
\[
\int\frac{1}{x^\eta \log^2(x)}\,dx= (1-\eta) E((1-\eta)\log x)-\frac{x^{1-\eta}}{\log x}+C,
\]
$C\in \mathbb{R}$, where $E(z):=\int_z^\infty \frac{e^{-t}}{t}\,dt$. The claim follows since
\[
\lim_{x\rightarrow\infty} (1-\eta) E((1-\eta)\log x)-\frac{x^{1-\eta}}{\log x}=\begin{cases} 0 & \eta\geq 1\\\infty & \eta<1\end{cases}.
\]
\end{proof}

Hence, we construct exact solutions $x^\dag$ via their Fourier transform
\begin{equation}\label{eq:example_b_fourierdef}
\hat x^\dag(\xi):=\begin{cases} 0 & |\xi|<2\\ \left(\frac{1}{|\xi| (\log |\xi|)^2(1+|\xi|^2)^p}\right)^{\frac{1}{2}} & |\xi|\geq 2\end{cases}
\end{equation}
to obtain, after an inverse Fourier transform, solutions $x^\dag\in H^p[0,1]$,  but for arbitrarily small $\epsilon>0$ $x\notin H^{p+\epsilon}[0,1]$. Namely, for this $x^\dag$, the $H^p$-norm in Fourier-domain \eqref{eq:hsnorm} reads
\[
\| x_s^\dag\|_{H^p}^2:=\int_\mathbb{R} \frac{1}{|\xi| (\log |\xi|)^2}\,d\xi \leq \infty.
\] Since we have little control over the boundary values through this approach, we will only consider the Hilbert scale induced by \eqref{eq:altB} for $0<p<\frac{1}{2}$.

\end{mprobl}

\section{Case studies}
\label{sec:studies}
In this section we provide numerical evidence for the behavior of regularized solutions $\xad$ with respect to the Case distinction from Section~\ref{sec:intro} and the Model problems from Section \ref{sec:examples}.

\subsection{Numerical studies for Model Problem~\ref{xmpl:example1}} \label{sec:sub1}
We consider the forward operator $F$ from \eqref{eq:fw-op1} in the setting $X=Y=L^2(0,1)$, $\domain(F)=X$. As was shown, a conditional stability estimate of the form \eqref{eq:stabbb} is valid there with
$a=1$ and $\gamma=1$ (cf.~formula \eqref{eq:upexample}). It must be emphasized that Assumption~\ref{ass:basic3} applies even in an extended manner, which means that there are finite constants $K(r)>0$ for arbitrarily large radii $r>0$ such that \eqref{eq:upexample} is valid.

In our first set of experiments, we will investigate the interplay between the value $p \in (0,1)$, $\alpha$-rates of the regularization parameter and error rates of regularized solutions $\xad$ using several test cases. To this end, we consider five reference solutions as shown in Figure \ref{fig:referencesolutions}. Of these examples, only RS5 fulfills the boundary condition $x(1)=0$, hence RS1-RS4 can only be an element of $X_p$ for $0<p<\frac{1}{2}$. Since for RS5 $x^\prime(0)\neq 0$, we have in this case $x^\dag\in H^{p}[0,1]$ for $p\leq \frac{3}{2}$.

\begin{figure}
\begin{tabular}{m{0.1\linewidth} m{0.5\linewidth} m{0.35\linewidth}}
RS1: & $ x(t)=\left\{\begin{array}{ll} 0 & \quad (0 \leq t \leq 0.5) \\ 1 & \quad (0.5 < t \le 1) \end{array}\right.$ & \includegraphics[width=\linewidth]{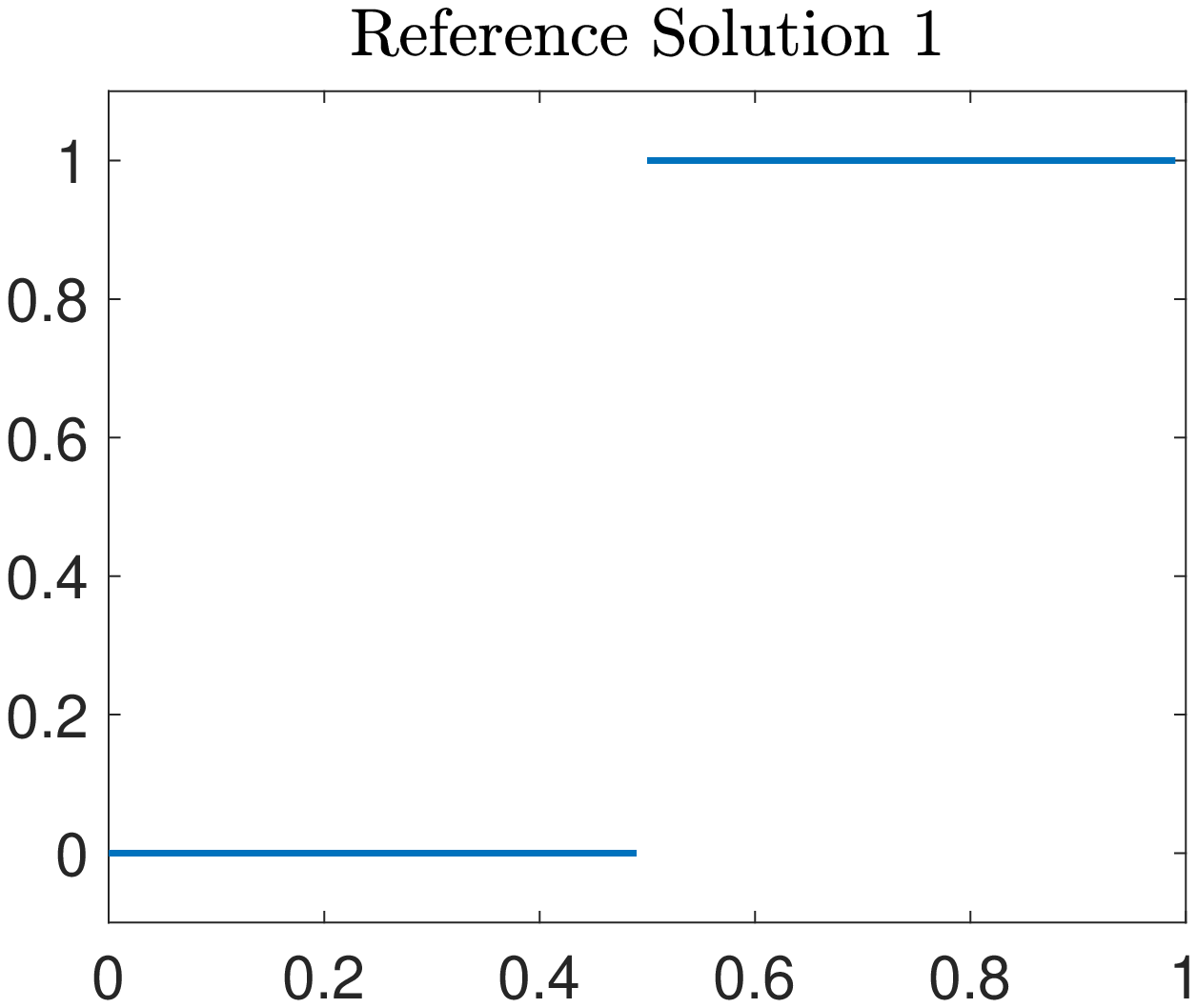}\\
RS2: & $x^{\dag}(t)=t \quad (0 \le t \le 1)$ & \includegraphics[width=\linewidth]{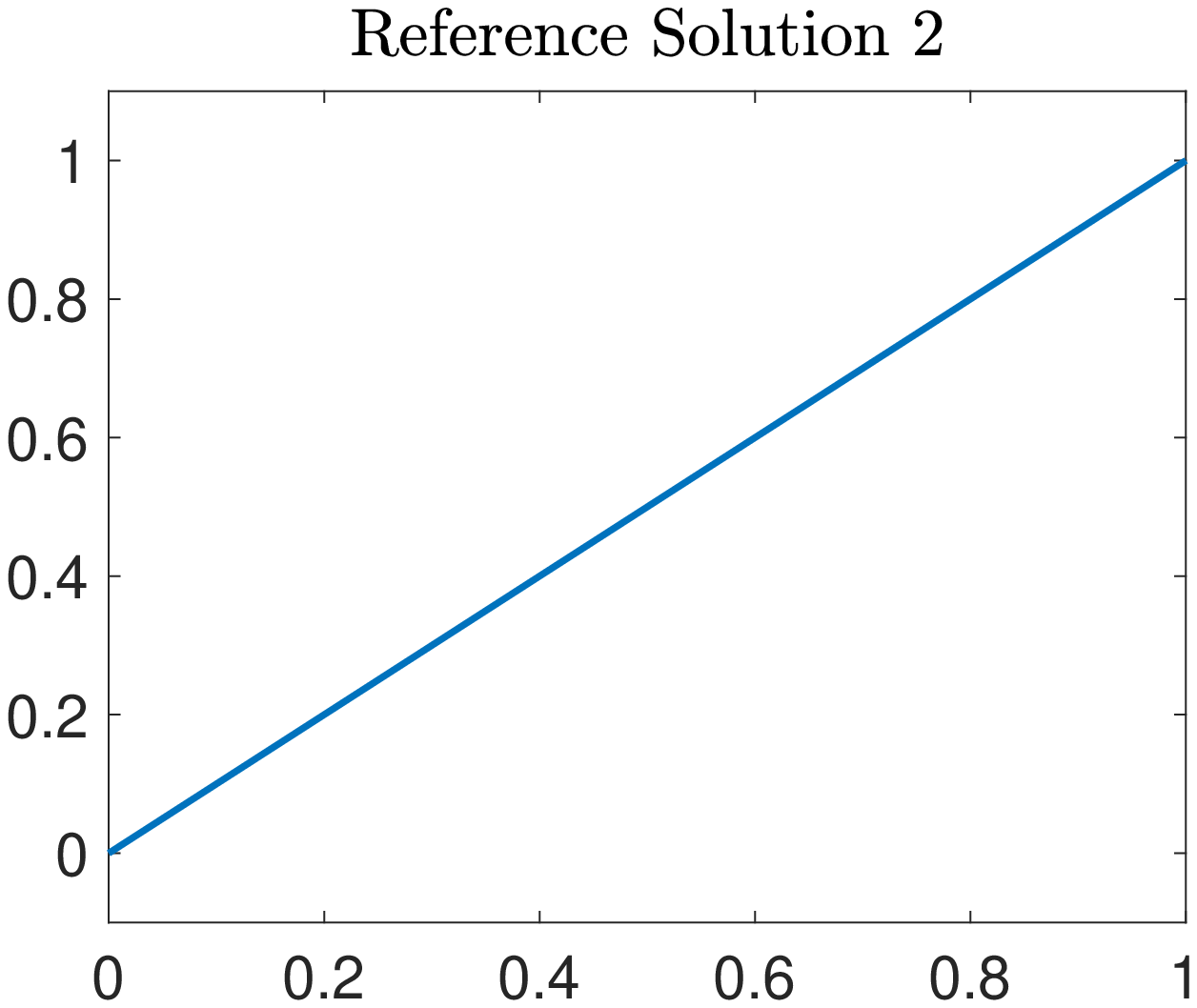}\\
RS3: & $x^{\dag}(t)=\left\{\begin{array}{ll} \sin(4 \pi t) & \quad (0 \le t \leq 0.5) \\
         1 & \quad (0.5 < t \le 1)\end{array}\right.$ & \includegraphics[width=\linewidth]{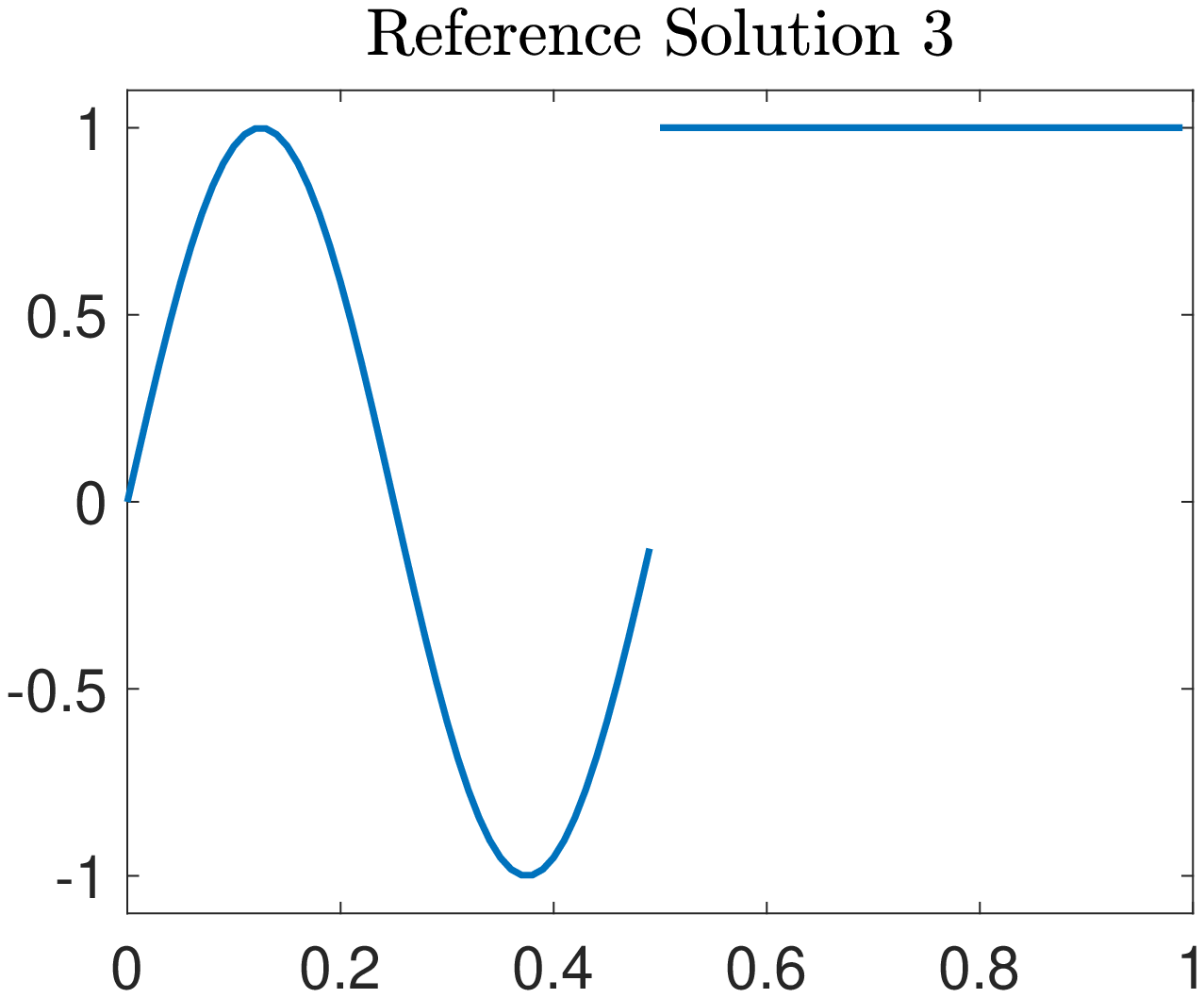}\\
RS4: & $x^{\dag}(t)=0.2+ \frac{0.36}{1+100(2.05t-0.2)^2}\,(0 \le t \le 1)$ & \includegraphics[width=\linewidth]{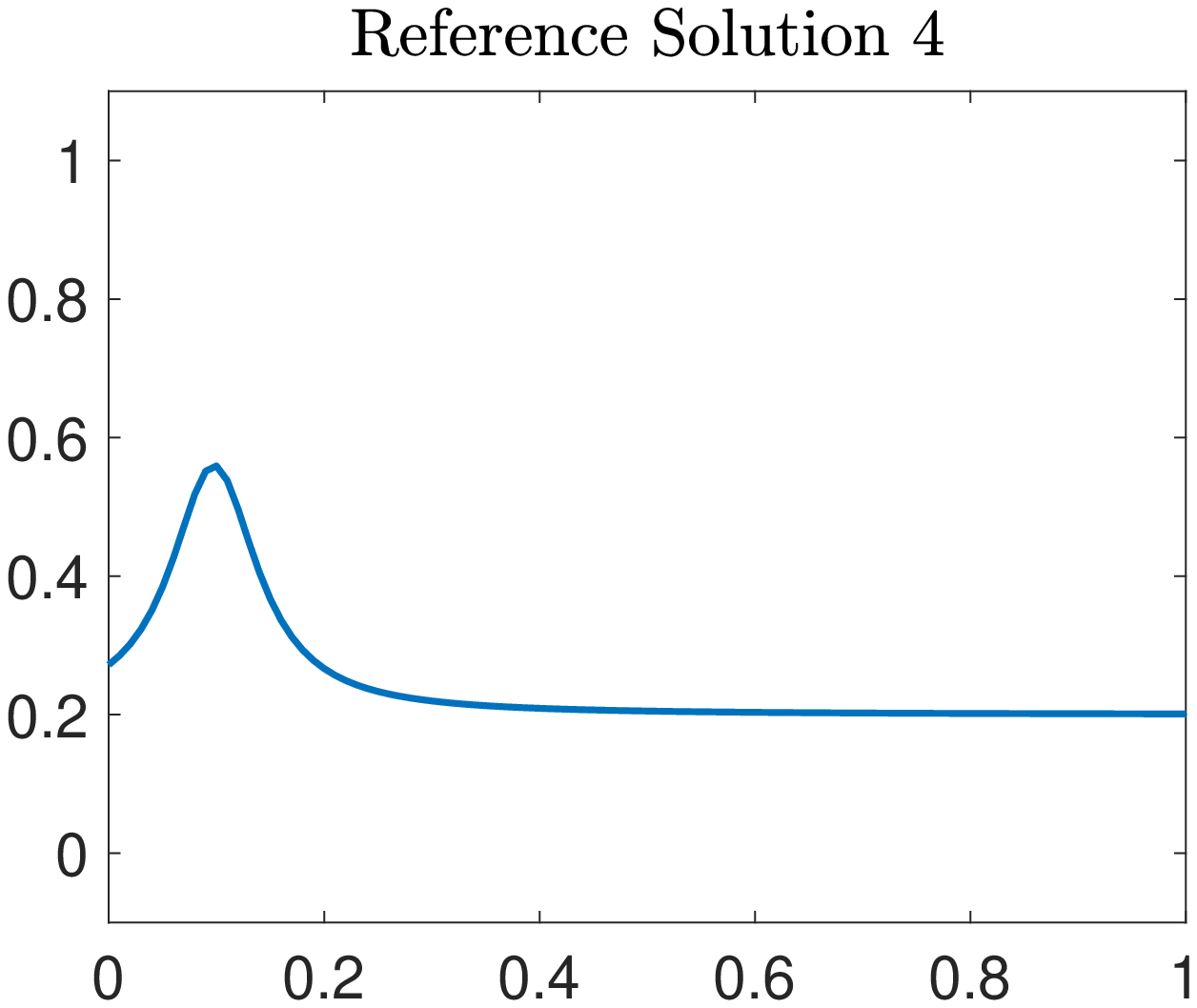}\\
RS5: & $x^{\dag}(t)=-(t-0.5)^2+0.25\quad (0 \le t \le 1)$ & \includegraphics[width=\linewidth]{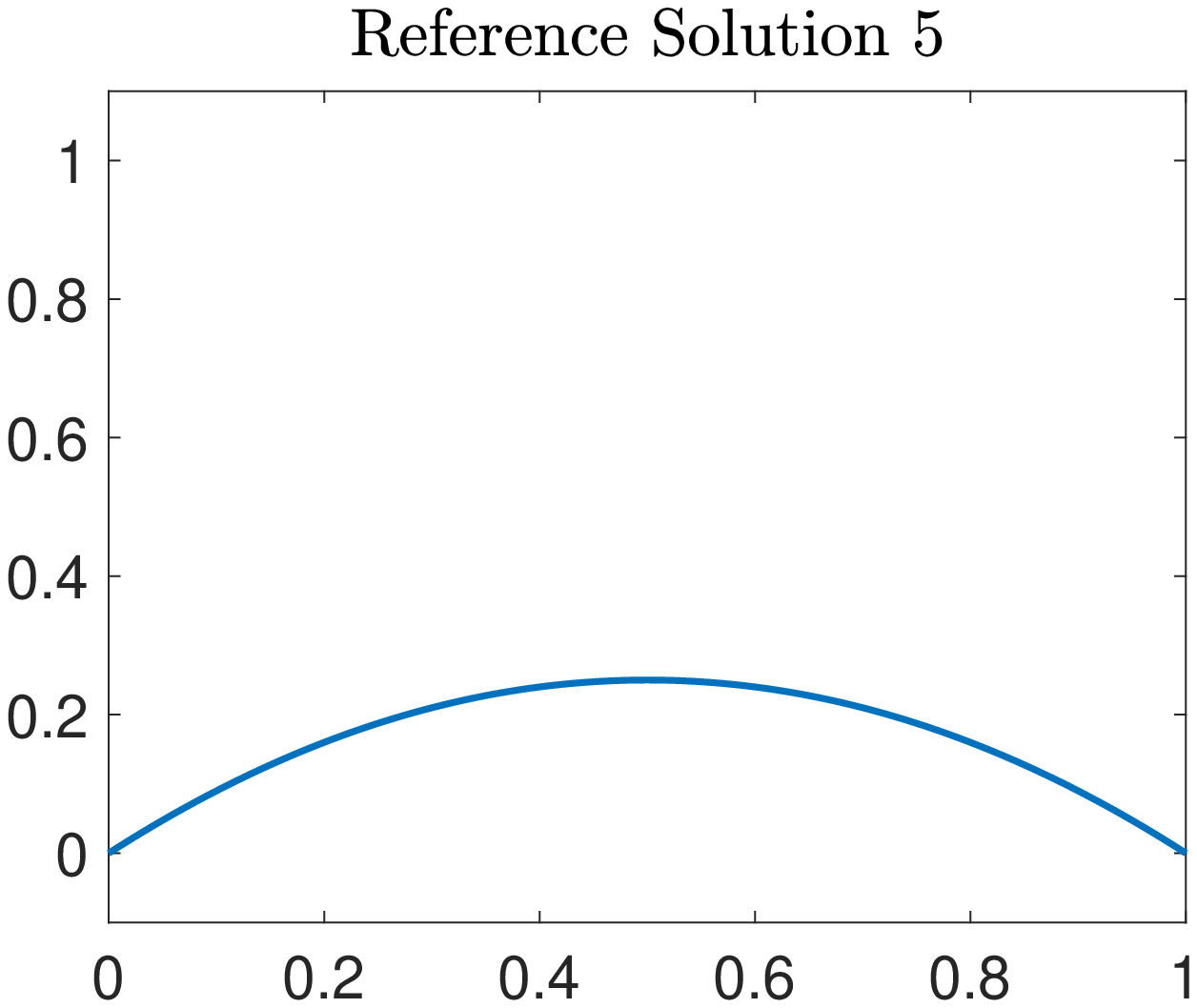}
\end{tabular}\caption{Reference solutions used in the first series of experiments. Due to the failure of the respective boundary conditions, we have $x^\dag\in X_p$, $0<p<\frac{1}{2}$ for RS1--RS4, and for RS5 we find $x^\dag\in X_p$ with $p\leq\frac{3}{2}$.}\label{fig:referencesolutions}
\end{figure}

To confirm our theoretical findings of Section \ref{sec:rates}, we solve \eqref{eq:tikhonov} using, after discretization via the trapezoidal rule for the integral, the MATLAB\textregistered -function \texttt{fmincon}. Typically, we use a discretization level $N=200$. To the simulated data $y=F(x^\dag)$ we add random noise for which we prescribe the \textit{relative error} $\bar\delta$ such that $\|y-y^\delta\|=\bar\delta \|y\|$, i.e., we have \eqref{eq:noise} with $\delta=\bar\delta\|y\|$. To obtain the $X_1$ norm in the penalty, we set $\|\cdot \|_1=\|\cdot\|_{H^1[0,1]}$ and additionally force the boundary condition $x(1)=0$. The regularization parameter $\alpha$ is chosen as $\alpha_{\scriptscriptstyle DP}=\alpha(\delta,y^\delta)$ using the discrepancy principle, i.e.,
\begin{equation} \label{eq:CC}
\delta\leq\|F(x_{\alpha_{\scriptscriptstyle DP}}^{\delta})-y^{\delta}\|_Y \leq C \delta,
\end{equation}
with some prescribed multiplier $C>1$. Unless otherwise noted C=1.1 was used.  From Section \ref{sec:rates}, we know that this should yield a $\alpha$-rate similar to the a-priori choice
$$\alpha(\delta)\sim \delta^{\frac{4}{p+1}},$$
cf.~formula \eqref{eq:Nattererlike}, that has already been used by Natterer in \cite{Natterer84} for linear problems in the case of oversmoothing penalties. We should also be able to observe the order optimal convergence rate $$\|x_{\alpha(\delta)}^{\delta}-x^{\dag}\|_X=\mathcal{O}(\delta^{\frac{p}{p+1}}) \qquad  \mbox{as}\qquad \delta \rightarrow 0.$$

Since $x^\dag$ is known, we can compute the regularization errors $\|x_\alpha^\delta-x^\dag\|_X$. We interpret this as a function of $\delta$ and make a regression for the model function
\begin{equation}\label{eq:regression_model_x}
\|x_\alpha^\delta-x^\dag\|_X\leq c_x \delta^{\kappa_x},
\end{equation}
and similarly we estimate the function behind the regularization parameter through the ansatz
\begin{equation}\label{eq:regression_model_alpha}
\alpha\sim c_\alpha \delta^{\kappa_\alpha}.
\end{equation}
Comparing \eqref{eq:regression_model_x} and the predicted rate \eqref{eq:rate_summary}, we have $\kappa_x=\frac{p}{a+p}$, hence we can estimate the smoothness of the solution as $p=\frac{a\kappa_x}{1-\kappa_x}$. Recall that $a=1$ in this example. Results on regularized solutions $x_{\alpha_{DP}}^\delta$ with the discrepancy principle for all five reference solutions are summarized in Table ~\ref{tab:rates_exp}. From this estimated $p$, we can calculate the a-priori parameter choice \eqref{eq:Nattererlike} and compare it to the measured one. Results on regularized solutions $x_{\alpha_{DP}}^\delta$ with the discrepancy principle for all five reference solutions are summarized in Table ~\ref{tab:rates_exp}.

\begin{table}
\begin{tabular}{p{2cm} p{1.5cm} p{1.5cm} p{1.5cm} p{1.5cm} p{1.5cm} p{1.5cm} p{1.5cm}}
RS & $c_x$ & $\kappa_x$ & $c_\alpha$ & $\kappa_\alpha$ & est.~$p$ & $\frac{4}{p+1}$\\
\\
\hline
\\
1    & 0.9578 & 0.3276 & 5.8483 & 2.7950 & 0.4871 & 2.6898\\
2    & 0.9017 & 0.3426 &  13.5714 & 2.8609 & 0.5212 & 2.6290 \\
3    &  1.6102 & 0.4110 & 0.2782 & 2.3221 & 0.6978 & 2.3560 \\
4    & 0.2571 & 0.2582 & 462.1747 & 2.7974 & 0.3481 & 2.9671\\
5 	 & 0.8868 	& 0.6135 & 25.8986  & 1.9546 & 1.5875 & 1.5459\\
\end{tabular}
\vspace{5mm}
\caption{Model Problem~\ref{xmpl:example1}: Numerically computed convergence rates \eqref{eq:regression_model_x} and $\alpha$-rates \eqref{eq:regression_model_alpha} for the five test cases with estimated values $p$ from the index $\kappa_x$,
characterizing approximately the smoothness of the exact solution in these test cases.}
\label{tab:rates_exp}
\end{table}

As discussed before, the reference solutions RS1, RS2, RS3 and RS4 all belong to case (c) of the Case distinction whereas RS5 belongs to case (a). Our computed results fit to this narrative. For RS1--RS4 we obtain an estimated $p<1$ and $\kappa_\alpha>2$, i.e., $\delta^2/\alpha\rightarrow \infty$ as $\delta \to 0$. As expected we have
$0<\kappa_\alpha<2$ for RS5 ($p>1$) together with  $\frac{\delta^2}{\alpha_{DP}} \rightarrow 0$ as $\delta \to 0$. For RS1 we know that $p$ is bounded above by 0.5 and the estimated value 0.487 fits well. In particular in the oversmoothing cases, we have an excellent fit between the $\alpha$-rates from the discrepancy principle and the a-priori choice based on our estimate of $p$.

As a second scenario for this model problem, we use the Sobolev scale $H^p[0,1]$, $0<p<\frac{1}{2}$ to investigate a particular case of (c) in our Case distinction. Using the Fourier transform, we construct our solutions $x^\dag$ such that $\hat x^\dag(\xi):= (1+|\xi|)^2)^{-\frac{p}{2}-\frac{1}{4}}$ which yields solutions $x^\dag\in H^{p-\epsilon}[0,1]$ for all $\epsilon>0$, but $x^\dag\notin H^p[0,1]$. This follows, because $\int_{\mathbb{R}}(1+|\xi|^2)^{\nu}d \xi$ converges for $\nu < -1/2$ (but diverges for $\nu \geq -1/2$) and the definition of the $H^p$-norm \eqref{eq:hsnorm} in Fourier domain. We take $p=\frac{1}{3}$ and in principal repeat the previous experiments with the new solutions. The main difference is that we now minimize a Tikhonov functional with variable penalty smoothness,
\begin{equation}\label{eq:tikhonov_s}
T^\delta_{\alpha,s}(x) := \|F(x) - \yd\|_Y^{2} + \alpha\|x\|_{H^s[0,1]}^2 \to \min, \;\; \mbox{subject to} \;\;  x\in\domain(F).
\end{equation}
From Section \ref{sec:conv} we would expect $\delta^2/\alpha\rightarrow 0$ for $s<p=\frac{1}{3}$, $\delta^2/\alpha\approx \mathrm{const}$ for $s=p=\frac{1}{3}$, and $\delta^2/\alpha\rightarrow \infty$ for $s>p=\frac{1}{3}$. The numerical results confirm this behavior, see Figure~\ref{fig:vars_b2} for a plot and Table~\ref{tab:rates_exp_b2} for the regression results along \eqref{eq:regression_model_x} and \eqref{eq:regression_model_alpha}. Note that in particular the exponent in the convergence rate $\kappa_x$ remains approximately constant as predicted by the theory.

\begin{figure}
\begin{center}
    \includegraphics[width=\linewidth]{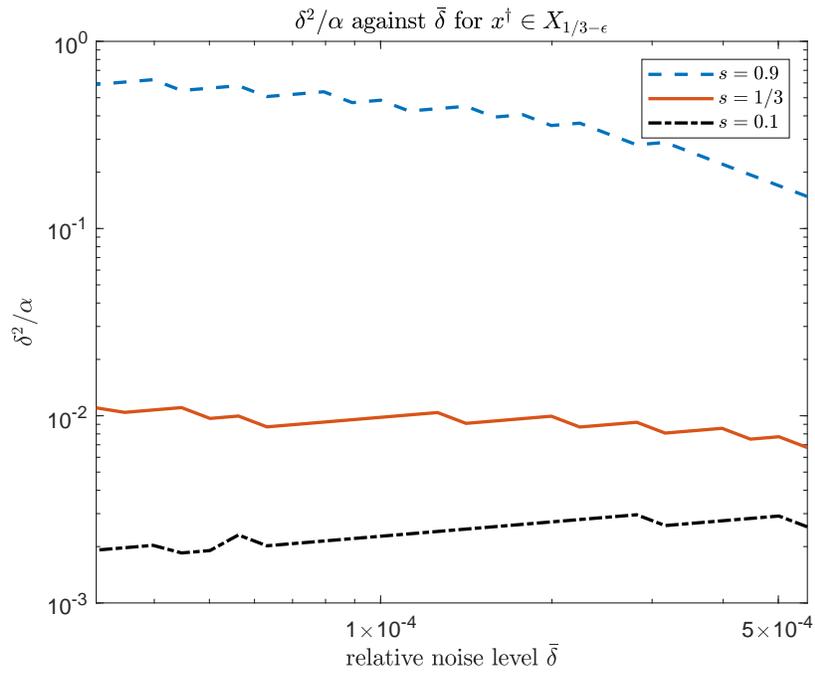}
\caption{Model Problem~\ref{xmpl:example1}: $\frac{\delta^2}{\alpha}$ for decreasing noise level and various $s$ in \eqref{eq:tikhonov_s}; $\xdag \in X_{1/3-\epsilon}$. In subcase (a) (black, dash-dotted) we have $\frac{\delta^2}{\alpha}\rightarrow0$ as $\delta\rightarrow 0$, in subcase (b) (red, solid) the quotient stays constant, and in subcase (c) (blue, dashed) we observe the predicted blow up $\frac{\delta^2}{\alpha}\rightarrow\infty$ as $\delta\rightarrow 0$.}
\label{fig:vars_b2}
\end{center}
\end{figure}

\begin{table}\begin{center}
\begin{tabular}{p{2cm} p{1.5cm} p{2cm} p{2cm} p{2cm} p{2cm}}
s & $c_x$ & $\kappa_x$ & $c_\alpha$ & $\kappa_\alpha$\\
\\
\hline
\\
0.1    & 0.9460 & 0.2647 & 86.90 & 1.8168\\
0.33    &  1.1492 & 0.2828 & 337.42  & 2.1324 \\
0.9    &  1.2633 & 0.2919 & 250.08 & 2.5319 \\
\end{tabular}
\vspace{5mm}
\caption{Model Problem~\ref{xmpl:example1}: numerically computed convergence rates \eqref{eq:regression_model_x} and $\alpha$-rates \eqref{eq:regression_model_alpha} for various $s$ in \eqref{eq:tikhonov_s} for given $\xdag \in X_{1/3-\epsilon}$.}
\label{tab:rates_exp_b2}\end{center}
\end{table}

Note that the ``bumpy" structure in Figure \ref{fig:vars_b2} and related plots below are due to the discrepancy principle as exemplified in Figure \ref{fig:DP_check_b1} for $s=0.9$.

\begin{figure}
\begin{center}
    \subfigure{\includegraphics[width=0.9\textwidth]{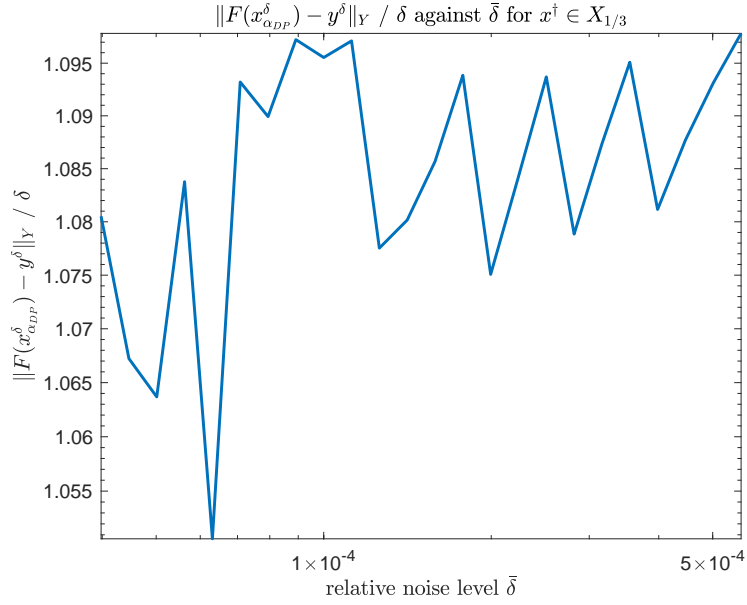}}
\caption{Model Problem~\ref{xmpl:example1}: Discrepancy constant $C=\|F(x_{\alpha_{DP}}^{\delta}) -y^{\delta} \|_Y/\delta$ for decreasing noise level and s=0.9 in \eqref{eq:tikhonov_s}; $\xdag \in X_{1/3-\epsilon}$.}
\label{fig:DP_check_b1}
\end{center}
\end{figure}

\subsection{Numerical studies for Model Problem~\ref{xmpl:example2}} \label{sec:sub2}
We now turn to the autoconvolution operator $F$ from (\ref{eq:fw-op20}). In this context,
we consider only the specific solution $\xdag(t)= 1\;(0 \le t \le 1)$, where $\xdag \notin X_1$, and the minimization problem \eqref{eq:tikhonov}. It must be emphasized that here, in contrast to model problem~\ref{xmpl:example1}, Assumption~\ref{ass:basic3} does not apply for any radius $r>0$. It is therefore completely unclear which behavior the regularized
 solutions $\xad$ show when the noise level $\delta$ tends to zero. It is a pitfall for exploiting Tikhonov regularization to get stable approximations for $\xdag$ when the regularized solutions $\xad$ from case (c), i.e., $0<p<1$, do not meet the validity area of the conditional stability estimate even if $\delta>0$
is sufficiently small. Then due to $\xad \notin Q$ estimates of type \eqref{eq:staba} are useless, convergence $\xad \to \xdag$ as $\delta \to 0$ cannot be ensured, and the behaviour of the regularized solutions
remains unclear. Such a situation occurs, as shown before, in Model problem~\ref{xmpl:example2} with $Q=\mathcal{B}^{X_1}_\kappa(0)$ and $\xdag \notin Q$ for $\xdag \equiv 1$.
The following numerical case studies for that situation enlighten the properties of $x_{\alpha}^{\delta}$. For the test computations, again the discrepancy principle $\alpha_{DP}=\alpha(\delta,y^\delta)$ according to \eqref{eq:CC} has been used with $C=1.3$ and a discretization of $N=200$ grid points over the interval $[0,1]$. In particular, we demonstrate that  $\|x_{\alpha_{DP}}^{\delta}-x^{\dag}\|_X$ does not tend to zero for $\delta \rightarrow 0$.

Figure~\ref{fig:reg_err} shows the regularization error $\|x_{\alpha_{DP}}^{\delta}-x^{\dag}\|_X$ depending on the relative noise level $\bar\delta$. It can be seen, that $\|x_{\alpha_{DP}}^{\delta}-x^{\dag}\|_X$ decreases for decreasing noise levels $\bar\delta$ whenever $\bar\delta \ge 2.9 \cdot 10^{-4}$. If $\bar\delta$ falls below the value $2.9 \cdot 10^{-4}$, then the monotonicity turns around and $\|x^{\dag}-x_{\alpha}^{\delta}\|_X$ begins to grow. As illustrated in the overview of Figure~\ref{fig:autoconv_noCSE_examples}, the regularized solutions tend to oscillate for small $\delta>0$, especially near the left and right boundaries of the interval $[0,1]$ in the sense of the Gibbs phenomenon. The Gibbs phenomenon at the right boundary $t=1$ accompanies the required jump from one to zero between $\xdag \notin X_1$ and
$\xad \in X_1$. The oscillations blow up for small values of $\delta$ (Figure \ref{fig:autoconv_noCSE_examples} (c)--(f))  and indicate non-convergence of $\xad$ for $\delta \to 0$. Note that the Gibbs phenomenon starts to appear around the minimum of $\delta^2/\alpha$, compare to Figure \ref{fig:reg_err}.
\begin{figure}
\begin{center}
    \subfigure{\includegraphics[width=0.9\textwidth]{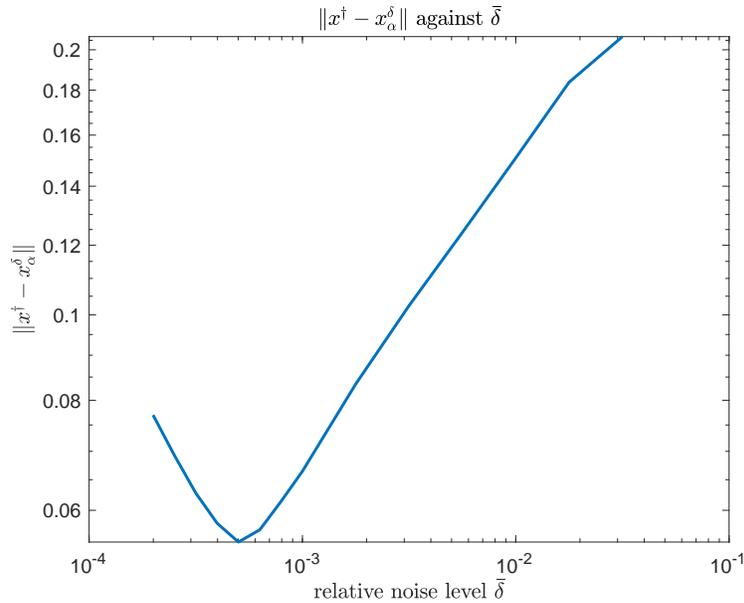}}
\caption{Model Problem~\ref{xmpl:example2}: Plot of $\|x_{\alpha_{DP}}^{\delta}-x^{\dag}\|_X$ against $\delta$ on a logarithmic scale for $x^{\dag} \equiv 1$. For small values of $\bar\delta$ start to diverge, since $x^\dag$ does not belong to the stability set $Q=\mathcal{B}_\kappa^{X_1}$.}
\label{fig:reg_err}
\end{center}
\end{figure}

\begin{figure}
\begin{tabular}{c c}
\includegraphics[width=0.5\linewidth]{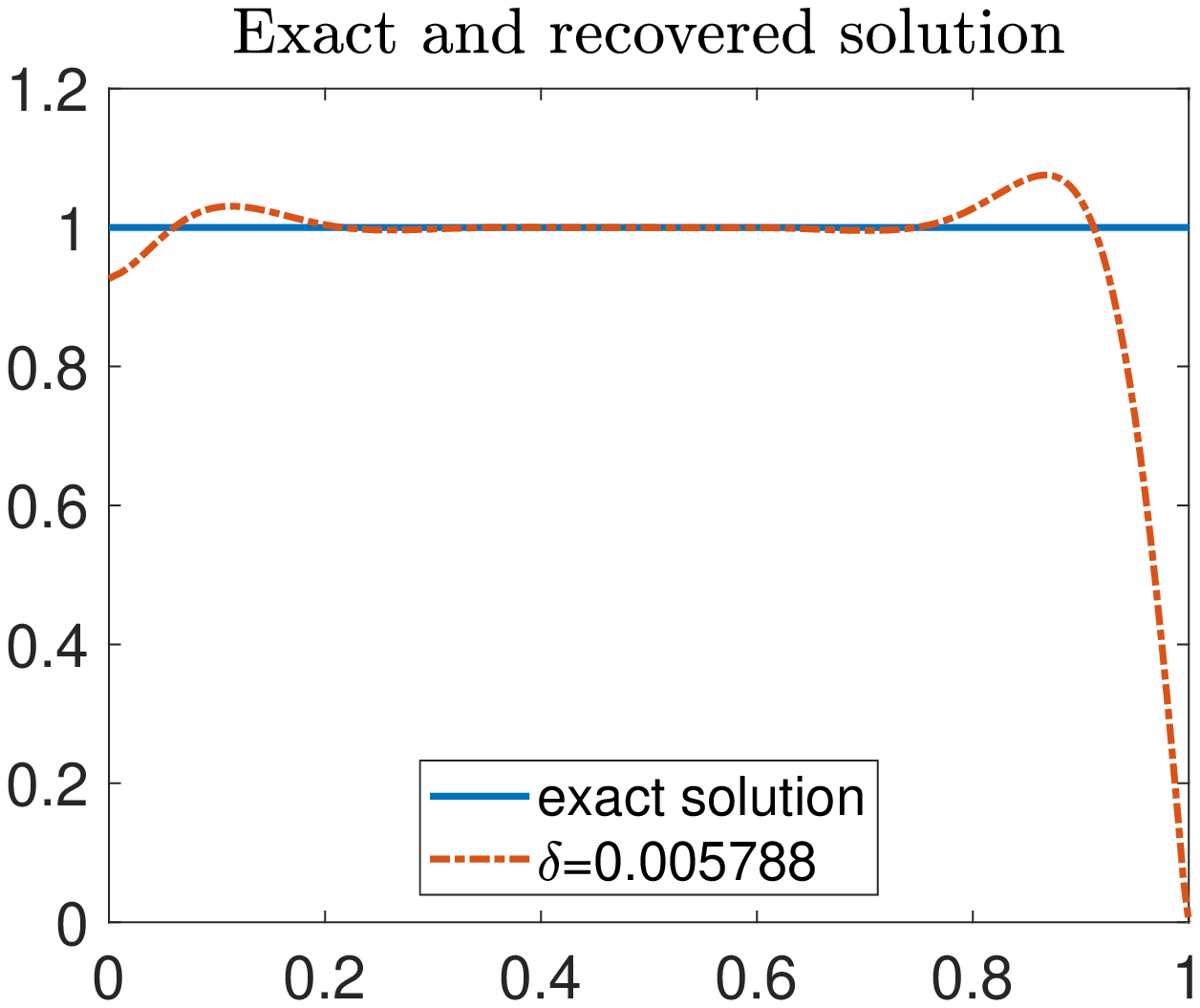} &\includegraphics[width=0.5\linewidth]{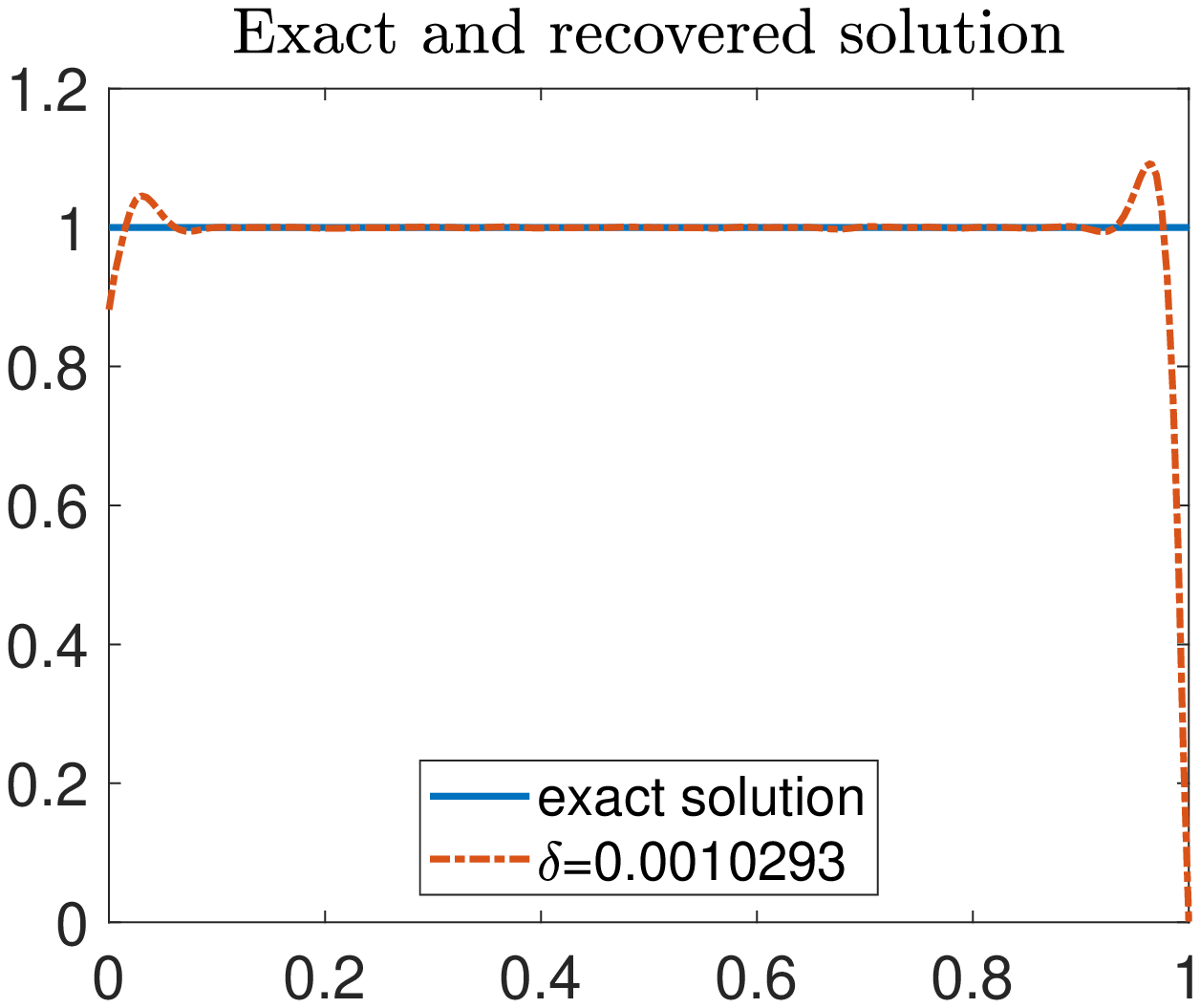}\\
(a) $\delta \approx 0.0058$ & (b) $\delta \approx 0.0010$ \\
\includegraphics[width=0.5\linewidth]{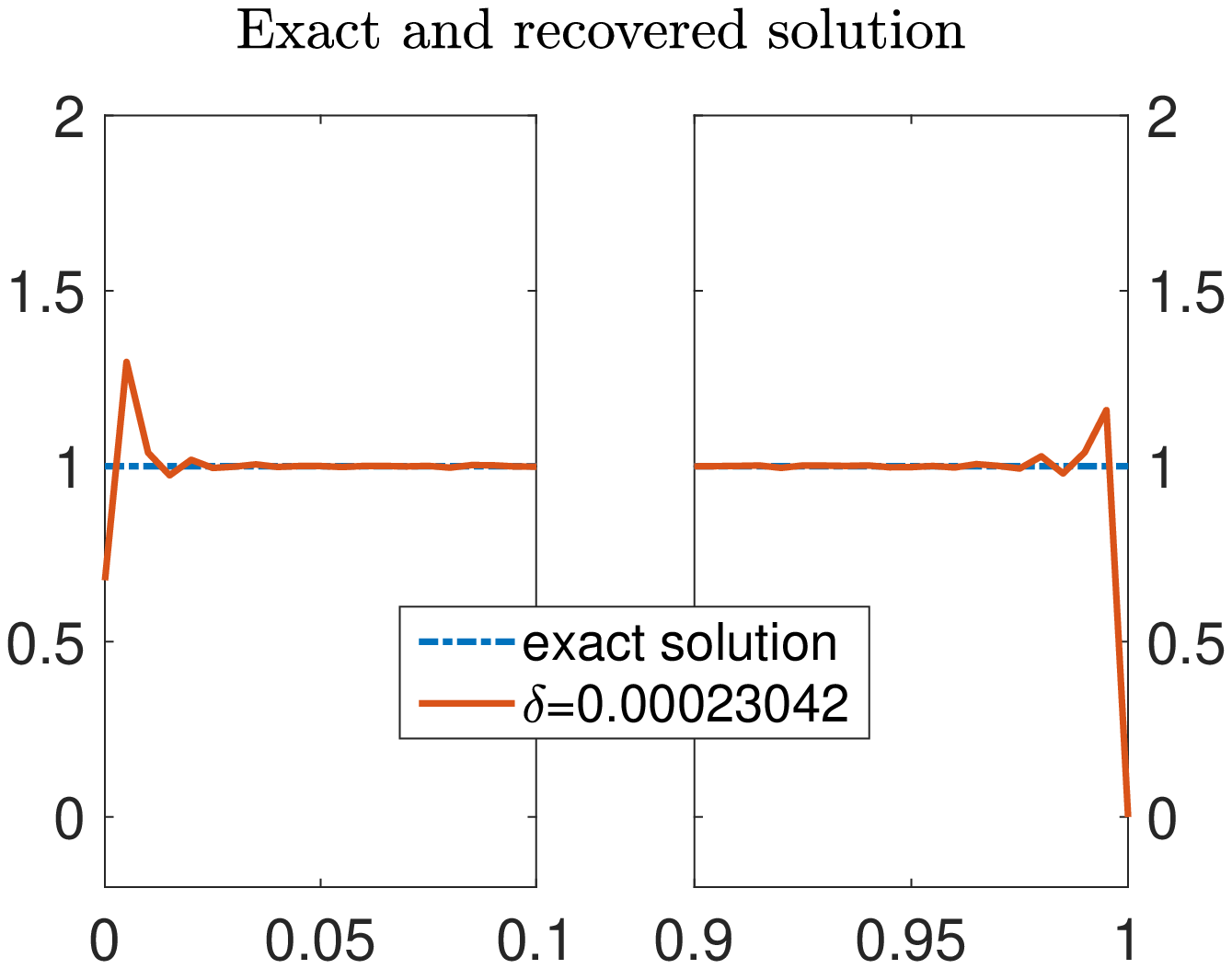} &\includegraphics[width=0.5\linewidth]{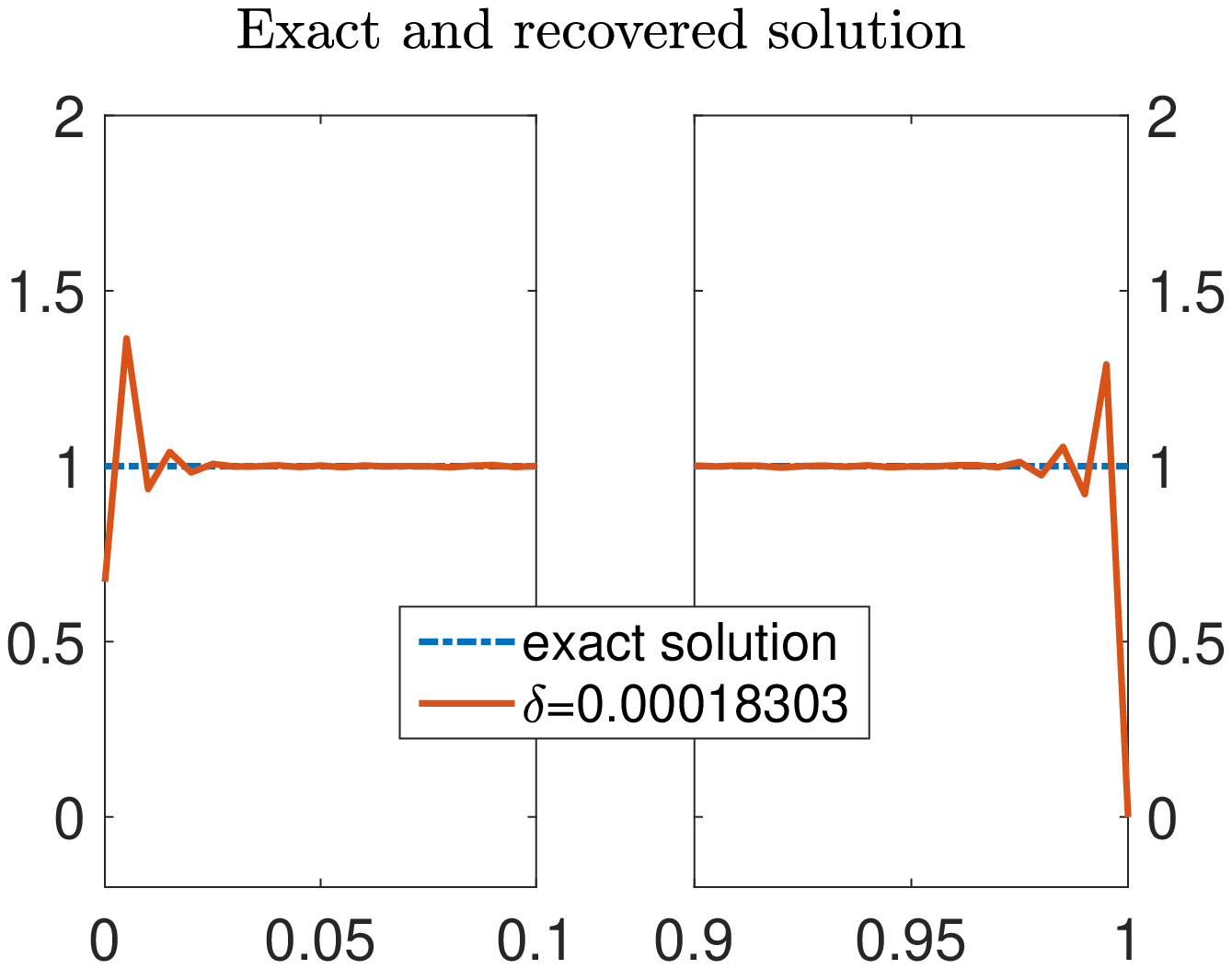}\\
(c) $\delta \approx 0.00023$ & (d) $\delta \approx 0.00018$ \\
\includegraphics[width=0.5\linewidth]{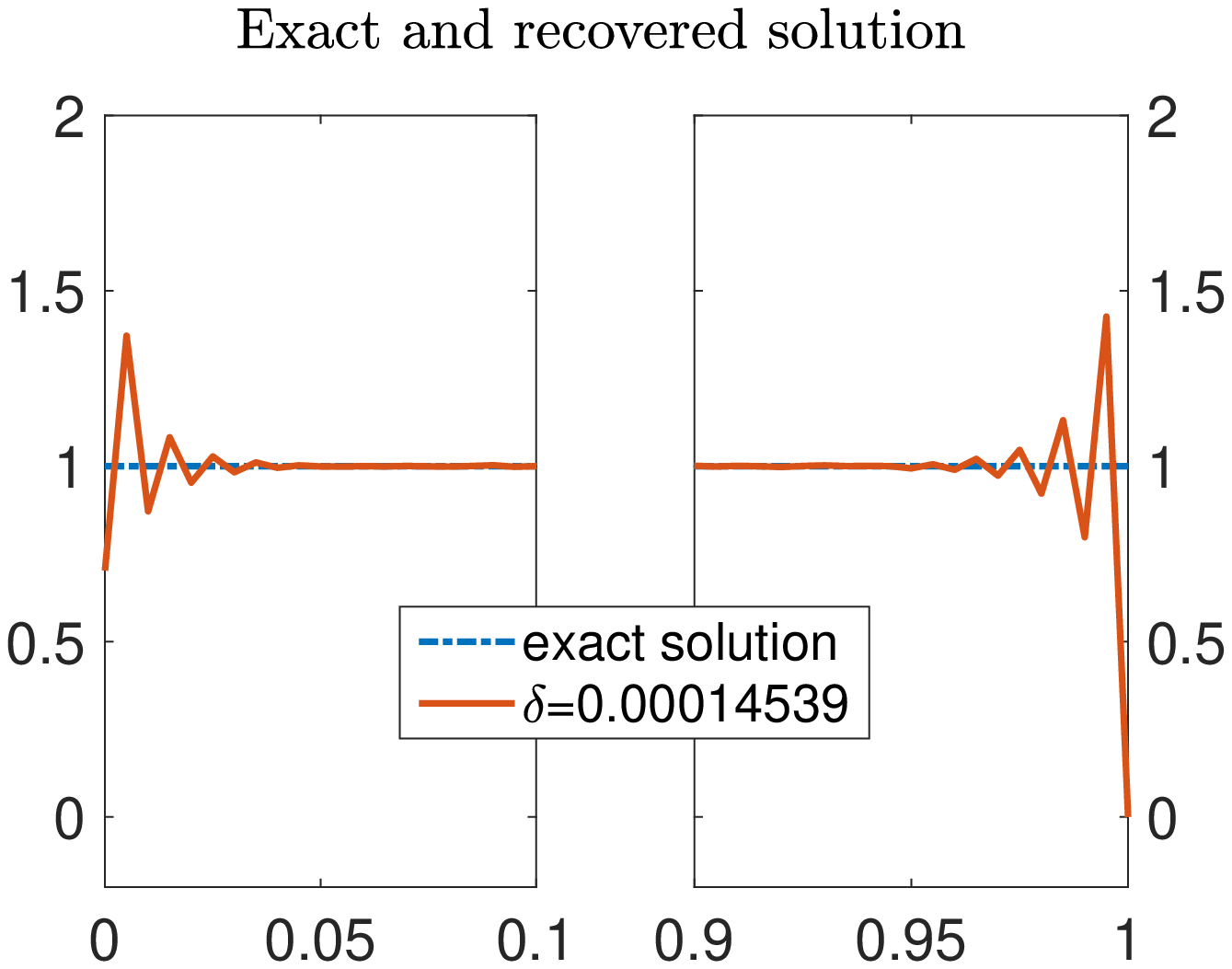} &\includegraphics[width=0.5\linewidth]{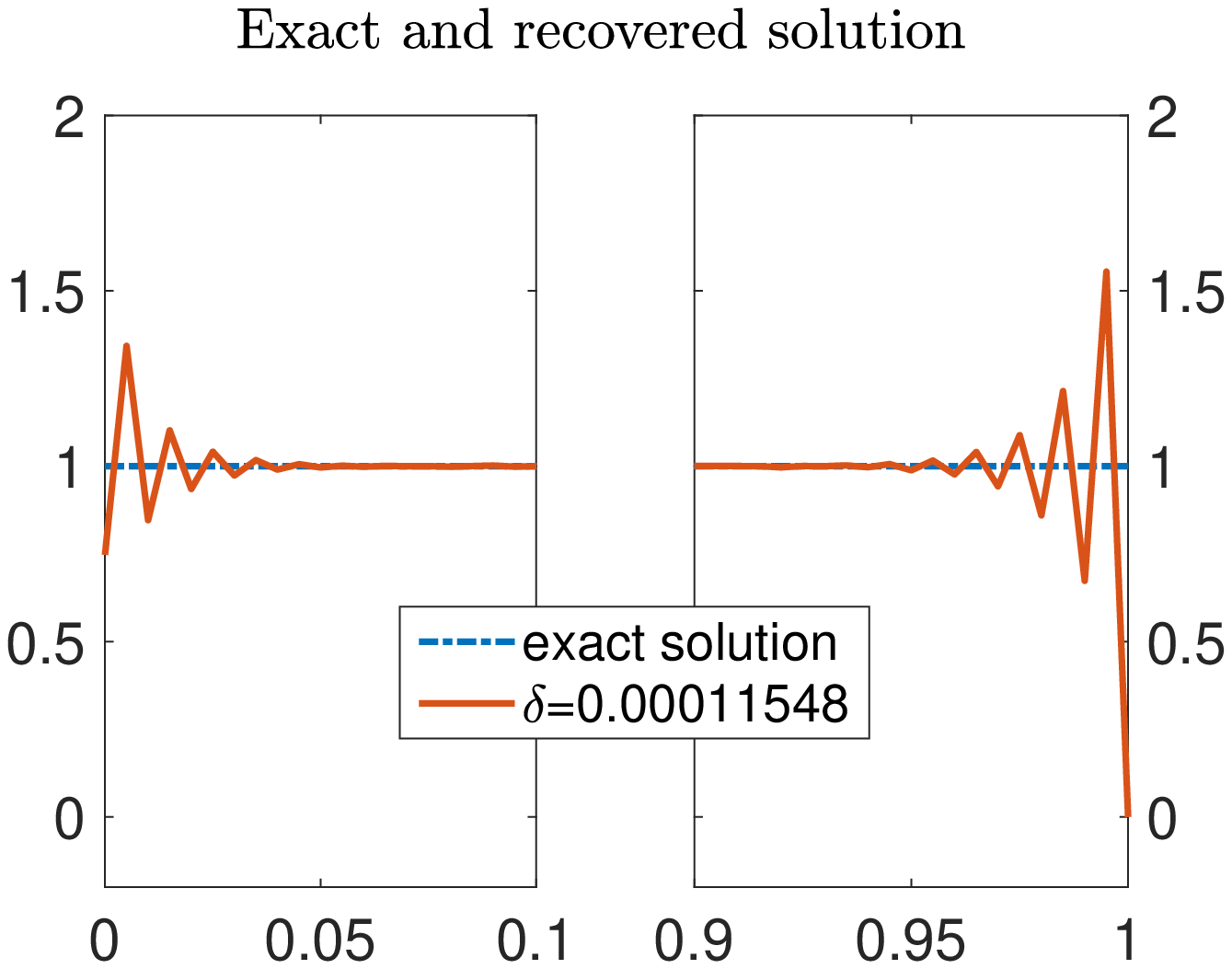}\\
(e) $\delta\approx 0.00014 $ & (f) $\delta \approx 0.00012 $ \\
\end{tabular}\caption{Model Problem~\ref{xmpl:example2}: Regularized and exact solutions with various noise levels, $x^\dag\equiv 1$. To improve the visibility of the blow-up at the boundaries, we omitted the middle part of the functions in the cases (c)-(f).}\label{fig:autoconv_noCSE_examples}
\end{figure}

To confirm that this phenomenon is inherent to the oversmoothing situation, we consider again the Tikhonov functional~\eqref{eq:tikhonov_s} with $H^s$-penalty for $\xdag \equiv 1$, $s=0.1$ and $s=0.5$ respectively. As $\xdag \equiv 1 \in X_p$ for $0<p<1/2$ we expect similar asymptotic behavior of $\frac{\delta^2}{\alpha}$  for $\delta \rightarrow 0$ as at the end of Section \ref{sec:sub1}. Figure \ref{fig:vars_ac} shows the result.

\begin{figure}
\begin{center}
    \subfigure{\includegraphics[width=0.9\textwidth]{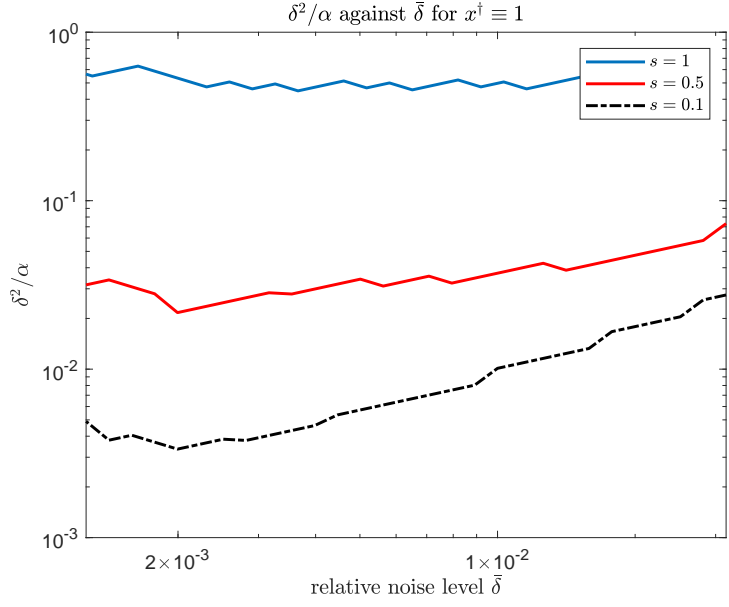}}
\caption{Model Problem~\ref{xmpl:example2}: Realized values of $\frac{\delta^2}{\alpha}$ for decreasing noise level and various s with $\xdag \equiv 1$.}
\label{fig:vars_ac}
\end{center}
\end{figure}

\subsection{Numerical studies for Model Problem~\ref{xmpl:example3}} \label{sec:sub3}
Based on the case destinction in Section~\ref{sec:intro} we now study the convergence rates and properties of $\frac{\delta^2}{\alpha}$ as $\delta$ decays to zero for the Model Problem~\ref{xmpl:example3} in case (b) of Case distinction. Using the Sobolev-scale with norm \eqref{eq:hsnorm} we define $\xdag \in X_p$, but $\xdag \notin X_{p+\epsilon}$ via \eqref{eq:example_b_fourierdef}. For given $\xdag \in X_p$ in the above sense, we then turn to the Tikhonov functional \eqref{eq:tikhonov_s} with penalty in $H^s[0,1]$. Again we choose $p=\frac{1}{3}$ which means $x^\dag\in X_{1/3}$ such that we can employ the theory from Model Problem~\ref{xmpl:example1}. For $s> p$ we are in the classical setting and therefore expect $\frac{\delta^2}{\alpha} \rightarrow 0$  as $\delta \rightarrow 0$, for $s < p$ we are in a oversmoothing situation and expect that $\frac{\delta^2}{\alpha} \rightarrow \infty$. Letting $s = p$ yields precisely case (b) of the Case distinction, and $\frac{\delta^2}{\alpha}$  should remain approximately constant. The numerical results, see Figure \ref{fig:vars_b1} and Table \ref{tab:rates_exp_b1}, confirm this. Note that, since $a=1$ and $p=\frac{1}{3}$, we expect and obtain $\kappa_x=0.25$. We also see that the $\kappa_\alpha<2$ for s=0.1, $\kappa>2$ for $s=0.9$, i.e. in the oversmoothing situation, and $k \approx 2$ and therefore  $\frac{\delta^2}{\alpha}$ approximately constant for the situation where $\xdag$ and penalty term are of the same smoothness.

\begin{figure}
\begin{center}
    \subfigure{\includegraphics[width=0.9\textwidth]{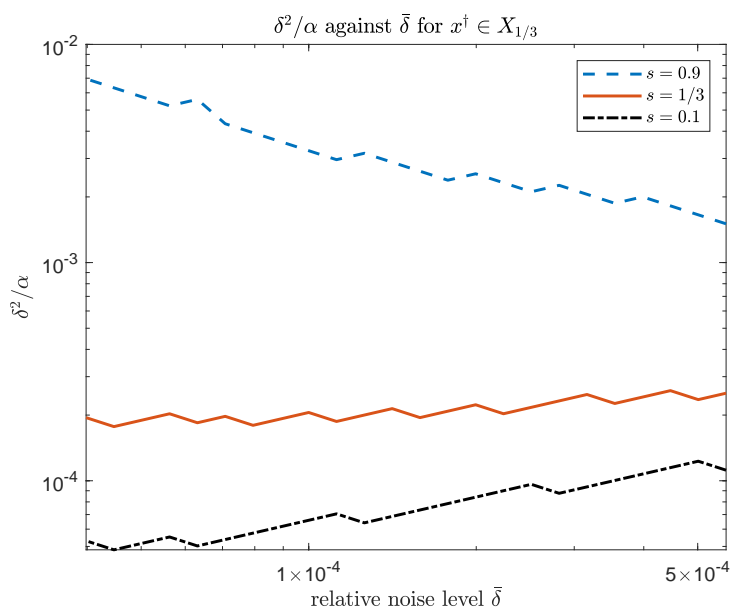}}
\caption{Model Problem~\ref{xmpl:example3}: Realized values of $\frac{\delta^2}{\alpha}$ for decreasing noise level and various s in \eqref{eq:tikhonov_s}; $\xdag \in X_{1/3}$. In subcase (a) of the Case distinction (black, dash-dotted) we have $\frac{\delta^2}{\alpha}\rightarrow0$ as $\delta\rightarrow 0$, in subcase (b) (red, solid) the quotient stays constant, and in subcase (c) (blue, dashed) we observe the predicted blow up $\frac{\delta^2}{\alpha}\rightarrow\infty$ as $\delta\rightarrow 0$.}
\label{fig:vars_b1}
\end{center}
\end{figure}

\begin{table}\begin{center}
\begin{tabular}{p{2cm} p{1.5cm} p{1.5cm} p{1.5cm} p{1.5cm} p{1.5cm}}
$s$ & $c_x$ & $\kappa_x$ & $c_\alpha$ & $\kappa_\alpha$\\
\\
\hline
\\
0.1    & 0.1275 & 0.2531 & 6.04e+02 & 1.6479\\
0.33    & 0.1228 & 0.2461 & 1.66e+03  & 1.8805 \\
0.9    & 0.1229 & 0.2427 & 3.84e+04 & 2.5385\\
\end{tabular}
\vspace{5mm}
\caption{Model Problem~\ref{xmpl:example3}: Numerically computed convergence rates \eqref{eq:regression_model_x} and $\alpha$-rates \eqref{eq:regression_model_alpha} in case (b) for various $s$ in \eqref{eq:tikhonov_s} and $\xdag \in X_{1/3}$.}
\label{tab:rates_exp_b1}\end{center}
\end{table}


%
%

\pagebreak

\section*{Appendix: Proof of Proposition~\ref{pro:rate_c}}

In this proof we set $E:=\|\xdag\|_p$. To prove the convergence rate result \eqref{eq:rate_c} under the a priori parameter choice \eqref{eq:alpha_c} it is sufficient to show that for sufficiently small $\delta>0$
there are two constants $K>0$ and $\tilde E>0$ such that the inequalities
\begin{equation}
      \label{eq:-a-bound}
      \|\xadast - \xp\|_{-a} \leq K \delta
    \end{equation}
and
     \begin{equation}
      \label{eq:p-bound}
      \|\xadast - \xp\|_{p} \leq \tilde E
    \end{equation}
hold. Namely, the convergence rate  \eqref{eq:rate_c} follows directly from inequality chain
\begin{equation*}
    \label{eq:interpolation}
    \|\xadast - \xp\|_{X} \leq \|\xadast - \xp\|_{-a}^{\frac{p}{a +
        p}} \|\xadast - \xp\|_{p} ^{\frac{a}{a + p}} \le K^{\frac{p}{a +p}} \tilde E^{\frac{a}{a + p}}\,  \delta^{\frac{p}{a + p}},
  \end{equation*}
which is valid for sufficiently small $\delta>0$ as a consequence of  \eqref{eq:-a-bound}, \eqref{eq:p-bound} and of the
interpolation inequality for the Hilbert scale $\{X_{\tau}\}_{\tau \in \mathbb{R}}$.

\smallskip

As an essential tool for the proof we use auxiliary elements $\xa$, which are
for all $\alpha>0$ the uniquely determined minimizers over all $x \in X$ of the artificial Tikhonov functional
\begin{equation}
  \label{eq:Tikh-aux}
  \Jaa (x) := \|x - \xp\|_{-a}^{2} + \alpha \|B x\|_{X}^{2}.
\end{equation}
Note that the elements $\xa$ are independent of the noise level $\delta>0$ and belong by definition to $X_1$, which is in strong contrast to $\xdag \notin X_1$.

The following lemma is an immediate consequence of \cite[Prop.~2]{HofMat18}, see also \cite[Prop.~3]{HofMat19}.

\begin{lemma}\label{lemma:old-aast}
  Let $\|\xp\|_p=E$ and $\xa$ be the minimizer of the functional $\Jaa$ from \eqref{eq:Tikh-aux} over all $x \in X$.
  Given $\aast=\aast(\delta)>0$ as defined by formula \eqref{eq:alpha_c} the resulting element~$\xaast$ obeys
  the bounds
  \begin{align}
    \|\xaast - \xp\|_{X} &\leq E \delta^{p/(a + p)}, \label{it:xa-xp}\\
    \|B^{-a}(\xaast - \xp)\|_{X} &\leq E \delta,\label{it:xa-xp-a}\\
    \|B\xaast\|_{X} & \leq E \delta^{(p-1)/(a + p)} = E \frac{\delta}{\sqrt{\aast}} \label{it:B-xa}
    \end{align}
    and
 \begin{equation*}   \label{eq:xa-p-norm}
    \|\xaast - \xp\|_{p} \leq E.
  \end{equation*}
\end{lemma}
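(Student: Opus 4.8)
\textbf{Plan of proof for Lemma~\ref{lemma:old-aast}.}
The statement collects the standard spectral estimates for the linear Tikhonov functional $\Jaa$ in the Hilbert scale, specialized to the a priori choice $\aast=\delta^{2-2\gamma(p-1)/(p+a)}$ with $\gamma=1$. My plan is to diagonalize the quadratic functional $\Jaa$ and reduce everything to elementary one-variable estimates in the spectral parameter. Writing the source condition as $\xp=B^{-p}v$ with $\|v\|_X=E$ (this is precisely the meaning of $\xp\in X_p$ with $\|\xp\|_p=E$), the minimizer of $\Jaa(x)=\|B^{-a}(x-\xp)\|_X^2+\alpha\|Bx\|_X^2$ is characterized by the normal equation $(B^{-2a}+\alpha B^2)\xa=B^{-2a}\xp$, so that $\xa-\xp=-\alpha B^2(B^{-2a}+\alpha B^2)^{-1}\xp=-\alpha B^{2+p}(B^{-2a}+\alpha B^2)^{-1}v$ and $B\xa=B^{1-a}\,(\cdot)$ after the analogous manipulation. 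Using the spectral representation of the self-adjoint operator $B$ (with spectrum contained in $[c_B,\infty)$), each of the four quantities becomes $\sup_{\lambda\ge c_B} g(\lambda,\alpha)\cdot E$ for an explicit rational function $g$; the task is to bound these suprema by the claimed powers of $\alpha$, and then substitute $\alpha=\aast$.

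The four estimates then follow from four applications of the same elementary inequality. First I would note $\|\xa-\xp\|_X^2=\int \big(\tfrac{\alpha\lambda^{2}}{\lambda^{-2a}+\alpha\lambda^{2}}\big)^2\lambda^{-2p}\,d\langle E_\lambda v,v\rangle$; the integrand multiplier $\tfrac{\alpha\lambda^{2+2a}}{1+\alpha\lambda^{2+2a}}\le 1$ and simultaneously $\le(\alpha\lambda^{2+2a})$, and interpolating the two bounds with exponents matched to the exponent of $\lambda^{-2p}$ yields $\sup_\lambda(\cdots)^2\lambda^{-2p}\le \alpha^{p/(a+p)}$ — this is the classical "filter times source" optimization and gives \eqref{it:xa-xp} after taking square roots and inserting $\aast$, since $\aast^{p/(2(a+p))}=\delta^{p/(a+p)}$. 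The estimate \eqref{it:xa-xp-a} for $\|B^{-a}(\xa-\xp)\|_X$ is the same computation with an extra factor $\lambda^{-2a}$, producing $\alpha^{(a+p)/(a+p)}=\alpha$ under the sup, hence the bound $E\alpha^{1/2}$... no: the sharp exponent here works out to give $E\,\delta$ directly once $\aast$ is substituted, because the relevant power of $\alpha$ is $(2a+2p)/(2a+2p)$-weighted to $\alpha$ and $\aast^{1/2}\cdot(\text{correction})=\delta$; I would double-check the bookkeeping so that the exponent $2-2(p-1)/(p+a)$ in \eqref{eq:alpha_c} produces exactly $\delta$. For \eqref{it:B-xa} I estimate $\|B\xa\|_X$; from the normal equation $B\xa=\alpha^{-1}B^{-2a-1}(B^{-2a}+\alpha B^2)^{-1}\cdot(\text{lower order})$ — more directly, $B\xa = B^{1-a-2a}\cdots$, and the sup gives $\alpha^{(p-1)/(2(a+p))}$ times $E$, which is $E\,\delta^{(p-1)/(a+p)}=E\,\delta/\sqrt{\aast}$ as claimed; note $p-1<0$ so this blows up, consistent with $\xp\notin X_1$. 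Finally $\|\xa-\xp\|_p\le E$ is immediate since the filter multiplier $\tfrac{\alpha\lambda^{2+2a}}{1+\alpha\lambda^{2+2a}}$ is bounded by $1$ and $\|\xp\|_p=\|v\|_X=E$.

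The only real bookkeeping obstacle is matching the exponents: one must verify that with $\gamma=1$ the exponent $2-2\tfrac{p-1}{p+a}$ in \eqref{eq:alpha_c} is exactly the value $\tfrac{2a}{a+p}+2\cdot\tfrac{?}{}$ — concretely, $\aast^{1/2}=\delta^{1-(p-1)/(p+a)}=\delta^{(a+1)/(a+p)}$, and one checks $\aast^{p/(2(a+p))}=\delta^{p(a+1)/((a+p)^2)}$... so in fact the clean identities are $\aast^{p/(2(a+p))}=\delta^{p/(a+p)}$ only after using the \emph{defining} relation $\delta=\aast^{1/2}\aast^{?}$; the correct route is to observe that the a priori choice is exactly the one balancing $\delta$ against $\alpha^{(a+1)/(2(a+p))}$, i.e. $\delta = \aast^{(a+1)/(2(a+p))}$... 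I would instead just quote that \eqref{eq:alpha_c} is equivalent to $\delta/\sqrt{\aast}=\aast^{(p-1)/(2(a+p))}$ and verify the four displays by substitution. Since all of this is routine spectral calculus, and indeed the lemma is attributed to \cite[Prop.~2]{HofMat18} and \cite[Prop.~3]{HofMat19}, the cleanest exposition is: state the source representation, write the normal equation, pass to the spectral integral, invoke the elementary estimate $\sup_{\lambda>0}\tfrac{(\alpha\lambda^{2+2a})^{\theta}}{1+\alpha\lambda^{2+2a}}\,\lambda^{-2p}\le C\,\alpha^{\min(\theta,\,p/(a+1))}$ for the appropriate $\theta$ in each case, and substitute $\aast$; the expected difficulty is purely in choosing the interpolation exponents $\theta$ correctly so each bound lands on the stated right-hand side.
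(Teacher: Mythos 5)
Your overall route --- write $\xp=B^{-p}v$ with $\|v\|_X=E$, derive the normal equation $(B^{-2a}+\alpha B^2)\xa=B^{-2a}\xp$, and reduce the four bounds to elementary filter estimates in the spectral parameter of $B$ --- is the standard and correct one; it is essentially how the cited source \cite[Prop.~2]{HofMat18} proceeds, while the paper itself gives no proof of the lemma beyond that citation. But the step you yourself flag as ``the only real bookkeeping obstacle'' is left genuinely unresolved, and every concrete identity you offer for it is wrong. Substituting $s:=\alpha\lambda^{2+2a}$ gives $\lambda^{-q}=\alpha^{q/(2+2a)}\,s^{-q/(2+2a)}$, so each of the four spectral multipliers is bounded by $\alpha^{q/(2a+2)}$ with $q\in\{p,\;p+a,\;p-1,\;0\}$: the denominator is $2(a+1)$, inherited from the exponent $2+2a$ of $\lambda$ in the filter, \emph{not} $2(a+p)$. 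Hence your claims ``$\sup_\lambda(\cdots)^2\lambda^{-2p}\le\alpha^{p/(a+p)}$'', ``$\aast^{p/(2(a+p))}=\delta^{p/(a+p)}$'', ``$\delta=\aast^{(a+1)/(2(a+p))}$'' and ``\eqref{eq:alpha_c} is equivalent to $\delta/\sqrt{\aast}=\aast^{(p-1)/(2(a+p))}$'' are all false unless $p=1$ (you even compute $\aast^{p/(2(a+p))}=\delta^{p(a+1)/(a+p)^2}$ yourself and then punt rather than fix it). Since the entire content of the lemma is that these particular powers of $\delta$ come out, this is a real gap in the proof as written.

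The repair is short. With $\gamma=1$, the choice \eqref{eq:alpha_c} reads $\aast=\delta^{(2a+2)/(a+p)}$, equivalently $\delta=\aast^{(a+p)/(2a+2)}$, and therefore $\aast^{q/(2a+2)}=\delta^{q/(a+p)}$ for every $q$. Taking $q=p$, $q=p+a$, $q=p-1$ and $q=0$ yields exactly \eqref{it:xa-xp}, \eqref{it:xa-xp-a}, \eqref{it:B-xa} and the final $p$-norm bound, with constant exactly $E$ (not $CE$), because in each case the remaining factor is $s^{\theta}/(1+s)\le 1$ with the relevant $\theta\in[0,1]$ --- e.g. $\theta=1-p/(2+2a)$ for \eqref{it:xa-xp} and $\theta=(1-p)/(2+2a)$ for \eqref{it:B-xa}, both admissible precisely because $0<p<1$ and $p+a<2+2a$. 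The identity $E\delta^{(p-1)/(a+p)}=E\delta/\sqrt{\aast}$ in \eqref{it:B-xa} then follows from $\sqrt{\aast}=\delta^{(a+1)/(a+p)}$. Note that your final displayed ``elementary estimate'' with exponent $\min(\theta,\,p/(a+1))$ already carries the correct denominator $a+1$; had you applied it consistently instead of the $(a+p)$-denominators used earlier, the four displays would have closed.
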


Due to \eqref{it:xa-xp} we have $\|\xaast-\xp\|_X \to 0$ as $\delta \to 0$. Hence by Assumption~\ref{ass:basic_OS}, in particular because $\xdag$ is an interior point of $\domain(F)$, for sufficiently small $\delta>0$ the element $\xaast$ belongs to $\mathcal{B}^X_r(\xdag)\subset \domain(F)$ and moreover with $\xaast \in X_1$ the inequality \eqref{eq:stabbbb} applies for $x=\xaast$ and such small $\delta$.

\smallskip

Instead of the inequality \eqref{eq:regineq1}, which is missing in case of oversmoothing penalties, we can use here the inequality
\begin{equation} \label{eq:regulpro}
  T^{\delta}_{\aast}(x_{\alpha_*}^\delta)\le
  T^{\delta}_{\aast}(\xaast).
\end{equation}
as minimizing property for the Tikhonov functional. Using \eqref{eq:regulpro} it is enough to bound $T^{\delta}_{\aast}(\xaast)$ by $\overline C^2 \delta^2$ with
\begin{equation}\label{eq:ovC}
\overline C:= \left((\overline K E +1)^{2}+ E^{2}\right)^{1/2}
\end{equation}
in order to obtain the estimates
\begin{align}
    \|F(\xadast) - \yd\|_{Y} \leq \overline C \delta
    \label{it:f-bound1}
    \intertext{and}
    \|B\xadast\|_{X} \leq \overline C \frac{\delta}{\sqrt{\aast}}.
    \label{it:f-bound2}
  \end{align}
Since the inequality \eqref{eq:stabbbb} applies for $x=\xaast$ and sufficiently small $\delta>0$, we can estimate for such $\delta$ as follows:
\begin{align*}
  T^{\delta}_{\aast}(\xaast)  & \leq \left(\|F(\xaast) - F(\xp)\|_Y + \|F(\xp) - \yd\|_{Y}\right)^2 + \aast  \|B\xaast\|_{X}^{2}\\
                              &\leq \left(\overline K \| \xaast - \xp\|_{-a} + \delta\right)^{2} + E^{2}
                                \aast   \delta^{2(p-1)/(a + p)}\\
                              & \le \left(\overline K E\delta + \delta\right)^{2} + E^{2}\delta^{2} \\
                              & = \left((\overline K E +1)^{2} + E^{2}\right) \delta^{2}.
\end{align*}
This ensures the estimates  \eqref{it:f-bound1} and \eqref{it:f-bound2}. Based on this we are going now to show that an inequality \eqref{eq:-a-bound} is valid for some $K>0$.
Here, we use the inequality \eqref{eq:stabbb1} of Assumption~\ref{ass:basic_OS}, which applies for $x=\xadast$, and we find
\begin{align*}
  \|\xadast - \xp\|_{-a} & \leq \frac 1 {\underline K} \|F(\xadast) -  F(\xp)\|_{Y}\\
                           & \leq \frac 1 {\underline K} \left(  \|F(\xadast) -\yd\|_{Y} +
                             \|F(\xp) - \yd\|_{Y}\right)\\
                           & \leq \frac 1 {\underline K} \left( \overline C \delta + \delta\right) = \frac 1 {\underline K} \left( \overline C +1 \right) \delta=K \delta,
\end{align*}
where $\overline C$ is the constant from \eqref{eq:ovC} and we derive
$K:=\frac 1 {\underline K} \left( \overline C +1 \right)$.

 \smallskip

Secondly, we still have to show the existence of a constant $\tilde E>0$ such that the inequality \eqref{eq:p-bound} holds.  By using the triangle inequality in combination with \eqref{it:f-bound2} and \eqref{it:B-xa} we find that
  $$
  \|B(\xadast - \xaast)\|_{X} \leq \|B\xadast\|_{X} +\|B\xaast\|_{X} \leq (\overline C+E) \frac{\delta}{\sqrt{\aast}}.
$$
Again, we use the interpolation inequality and can estimate further as
   $$ \|\xadast - \xaast\|_{p} \leq \|\xadast - \xaast\|_{1}^{\frac{a
      + p}{a + 1}} \|\xadast - \xaast\|_{-a} ^{\frac{1 - p}{a + 1}}$$
      $$ \le \left((\overline C+E) \frac{\delta}{\sqrt{\aast}}\right)^{\frac{a      + p}{a + 1}}\left( \|\xadast - \xp\|_{-a}+\|\xp - \xaast\|_{-a}  \right)^{\frac{1 - p}{a + 1}}$$
     $$ \le \left((\overline C+E) \frac{\delta}{\sqrt{\aast}}\right)^{\frac{a      + p}{a + 1}}\left((K+E)\delta\right)^{\frac{1 - p}{a + 1}}$$
    $$  \left((\overline C+E)\delta^{(p-1)/(a + p)} \right)^{\frac{a      + p}{a + 1}}\left((K+E)\delta\right)^{\frac{1 - p}{a + 1}}=:\bar E.$$
    Finally, we have now
     $$\|\xadast - \xp\|_{p} \leq\|\xadast - \xaast\|_{p} +\|\xaast - \xp\|_{p} \le \bar E+E =:\tilde E. $$
     This shows \eqref{eq:p-bound} and thus completes the proof of Proposition~\ref{pro:rate_c}. \qed

\section*{Acknowledgment}
We thank the colleagues Volker Michel and Robert Plato from the University of Siegen for a hint to the series that allowed us to formulate Model problem~\ref{xmpl:example3}.
The research was financially supported by Deutsche
Forschungsgemeinschaft (DFG-grant HO 1454/12-1).

%

\end{document}